\documentclass[11pt]{amsart}
\usepackage{amsmath, amsthm, amssymb, color,xcolor}
\usepackage[ pdfborderstyle={/S/U/W 1}]{hyperref}
\definecolor{MyBlue}{HTML}{210cac}
\definecolor{MyCiteColor}{HTML}{0099FF}
\definecolor{MyRed}{HTML}{3E186A}
\definecolor{Red2}{HTML}{FF6E00}
\usepackage{tikz-cd}
\hypersetup{%
  pdfborderstyle={/S/U/W 1},
  colorlinks=true,
  linkcolor=MyBlue,
  citecolor=MyCiteColor,
  urlcolor=MyRed,
  anchorcolor=MyBlue,
  linkbordercolor=MyRed,
}
	\definecolor{oran}{HTML}{0A7F5E}
\usepackage[shortlabels]{enumitem}
\usepackage[final]{pdfpages}
\setboolean{@twoside}{false}
\usepackage{caption}
\usepackage{subcaption}
\usepackage{mathrsfs,upgreek}
\usepackage{scalefnt}
\usepackage{verbatim}
\usepackage{wrapfig}
\setlength{\textwidth}{6.2in}
\calclayout

\makeatletter
\newcommand{\eqnum}{\refstepcounter{equation}\textup{\tagform@{\theequation}}}

\makeatother

\newtheorem{theorem}{Theorem}
\numberwithin{theorem}{section}
\newtheorem{proposition}[theorem]{Proposition}
\newtheorem{lemma}[theorem]{Lemma}
\newtheorem{corollary}[theorem]{Corollary}

\theoremstyle{definition}
\newtheorem{definition}[theorem]{Definition}

\newtheorem{remark}[theorem]{Remark}
\newtheorem{example}[theorem]{Example}

\newcommand{\QQ}{\mathbb{Q}}
\newcommand{\pp}{\mathbb{P}}

\newcommand{\CC}{\mathbb{C}}
\newcommand{\kk}{\mathfrak{K}}
\newcommand{\ZZ}{\mathbb{Z}}
\newcommand{\sG}{\mathcal{G}}

\newcommand\rank{\mathrm{rank~}}
\newcommand\Span{\mathrm{span}}
 \date{}
 \usepackage{booktabs}
\usepackage{colortbl}
 \definecolor{Ftitle}{RGB}{11,46,108}
\definecolor{line}{RGB}{87,39,117}
\colorlet{tableheadcolor}{Ftitle!25} 
\colorlet{tablerowcolor}{gray!10} 

\newcommand{\conj}{{\rm conj}}
\newcommand{\Null}{{\rm null}}

\newcommand\Var{{\mathcal{V}}}
\usepackage{multirow}

\title{Probabilistic Saturations and Alt's Problem}
\author{Jonathan D. Hauenstein}
\author{Martin Helmer}

\begin{document} 

\begin{abstract} \noindent
Alt's problem, formulated in 1923, is to count the number of
four-bar linkages whose coupler curve interpolates nine general
points in the plane.  This problem can be phrased 
as counting the number of solutions to a system of polynomial equations
which was first solved numerically using homotopy continuation 
by Wampler, Morgan, and Sommese in 1992.  Since
there is still not a proof that all solutions were obtained, 
we consider upper bounds for Alt's problem
by counting the number of solutions outside of the base locus
to a system arising as the general linear combination of polynomials.
In particular, we derive effective symbolic and numeric methods 
for studying such systems using probabilistic saturations
that can be employed using both finite fields and floating-point computations.
We give bounds on the size of finite field required to achieve a 
desired level of certainty.  These methods can also be applied 
to many other problems where similar systems arise such as 
computing the volumes of Newton-Okounkov bodies and 
computing intersection theoretic invariants 
including Euler characteristics, Chern classes, and Segre classes. 
\end{abstract}
\maketitle

\section{Introduction}

In 1923, Alt \cite{Alt} realized that finitely many four-bar planar linkages
(see Figure~\ref{fig:4bar})
can be constructed whose coupler curve interpolates nine general points 
in the plane.  Since he could not determine the exact number himself,
he left this as an open problem which was solved numerically
using homotopy continuation nearly 70 years later
by Wampler, Morgan, and Sommese \cite{WMS}.
In particular, they showed that there are 1442 distinct four-bar coupler
curves which pass through nine general points which, together with
Roberts cognates \cite{R}, yields 4326 distinct four-bar linkages.
Although this computation has been repeatedly confirmed
using various homotopy continuation methods, 
e.g., \cite{CCPC,Alts1,MhomTrace,Regeneration,FRG,ConstrainedHomotopy},
these numerical computations 
do not preclude the existence of additional solutions.  
In fact, one of the distinct four-bar linkages
was missed by the homotopy continuation solver in \cite{WMS} 
but was reconstructed using the cognate formula.
Since a sharp upper bound has not yet been established, we
aim to derive such an upper bound by considering a generalization
of Alt's problem which arises by counting the number of solutions
outside of the base locus to a system of polynomials
arising from a general linear combination of given polynomials.

Although formulating an upper bound on Alt's problem in this
fashion is new, the idea of constructing polynomial systems
from randomly generated polynomials which have finitely-many solutions 
is not new.  For example, the volume of 
Newton-Okounkov bodies \cite{KK12,LM09,Okunkov96}
(which generalize the volume of Newton polytopes \cite{BKK} 
in the monomial case) correspond with 
the number of solutions to such systems.  
Homotopies utilizing this structure have also been proposed \cite{BeyondPolyhedral}
and comparisons between homotopy continuation and
modular Gr\"obner basis methods have been performed \cite{Comparison}.
Moreover, aspects of this problem have also been considered 
in the context of computing Euler characteristics \cite{H17,MB12} 
and Segre classes~\cite{HH18,H16}. 
In addition to formulating an upper bound on Alt's problem,
the other novelty of our approach is to 
provide various probabilistic features of an
effective algorithmic solution via modular Gr\"obner basis computations.
We incorporate ideas used in the Gr\"obner trace algorithm \cite{GBTrace} and other modular algorithms for Gr\"obner basis computation such as~\cite{ModGB}.

The rest of the paper is structured as follows. 
Section~\ref{section:background} formulates the problem 
and provides necessary background.  
Section~\ref{sec:HilbertFunctions} compares Hilbert functions 
computed using numerical methods and symbolic methods.
Section~\ref{sec:ProbRandomIsGeneral} analyses the probability
that randomly selected constants are general. Section~\ref{sec:Computations} 
describes the results of some computational experiments 
and the paper ends with a summary of our results in the context of Alt's problem in Section \ref{Section:Conclusion}.

\section{Formulation}\label{section:background}

For a system of polynomials in $\CC[x_1,\dots,x_n]$, say
\begin{equation}
f(x)=\begin{bmatrix}
f_1(x)\\
f_2(x)\\
\vdots\\
f_r(x)
\end{bmatrix},
\end{equation} 
define
$$\Var(f) = \{x\in \CC^n~|~f(x) = 0\}.$$
For $X = \Var(f)$ and an integer $i=0,\dots,n-1$, we seek to compute
\begin{equation}
g_i(X, \CC^n)=\deg \left(\Var(\Theta\cdot x-{\mathbf{1}}, \Lambda \cdot f)\setminus X \right)\label{eq:g_is}
\end{equation}
where $\Theta\in \CC^{i\times n}$ and $\Lambda\in \CC^{(n-i)\times r}$ are general, and $\mathbf{1}$ is the length $i$ vector where each entry
is $1$. That is, we aim to count the number of solutions to a system 
of $i$ general affine linear polynomials and $n-i$ general linear combinations
of $f_1,\dots,f_r$ outside of the base locus~$X$.

One approach for enforcing the solutions be 
outside of the base locus $X$ is via the Rabinowitz trick.
That is, one introduces a new variable $T$ and, for $\mu\in\CC^r$ general,
we have
$$g_i(X,\CC^n) = \deg(\Var(\Theta\cdot x-1, \Lambda\cdot f, 1 - (\mu\cdot f)T)).$$

\begin{example}
To illustrate, consider $f(x) = [x_1,x_2,x_1x_2^2,x_1^3x_2^2]^T$
consisting of $r = 4$ polynomials in $n=2$ variables.
Clearly, $\Var(f) = \{(0,0)\}$.  Letting $\Box$ represent a general number, 
$$g_1(X,\CC^2) = \deg(\Var(\Box x_1 + \Box x_2 - 1, \Box x_1 + \Box x_2 + \Box x_1x_2^2 + \Box x_1^3 x_2^2)\setminus X) = 5$$
since there are $5$ points of intersection,
all of which are away from the origin,
between a general affine line and an irreducible quintic curve.
Additionally, 
$$g_0(X,\CC^2) = \deg(\Var(\Box x_1 + \Box x_2 + \Box x_1x_2^2 + \Box x_1^3 x_2^2, \Box x_1 + \Box x_2 + \Box x_1x_2^2 + \Box x_1^3 x_2^2)\setminus X) = 6$$
since the two irreducible quintic curves intersect in $7$
points, one of which is the origin.
Since~$f$ consists of monomials, 
these values can
be computed via mixed volume computations, e.g.,~\cite{BKK}.\label{example:firstEx}
\end{example}

\subsection{Relation to Alt's problem} \label{subSection:AltsIntro}

A four-bar linkage is the simplest moveable planar closed-chain linkage,
one of which is shown in Figure~\ref{fig:4bar}
together with part of the coupler curve that it traces out.
Without loss of generality, we can assign
the origin $0$ to be a point on the coupler curve 
and describe a four-bar linkage using $4$ points
in the plane: $A$, $B$, $X$, and $Y$.  
The points~$A$ and $B$ correspond with the two fixed pivots
while the segment $XY$ is the so-called floating link.
Since the $4$ planar points constitute $n = 8$
design parameters, Alt realized that one
can additionally specify $8$ general points 
resulting in finitely many four-bar linkages
whose coupler curve passes through all nine specified points,
i.e., the origin and $8$ others.

\begin{figure}[!t]
\begin{center}
\begin{picture}(150,150)
\put(0,0){\includegraphics[scale=0.35]{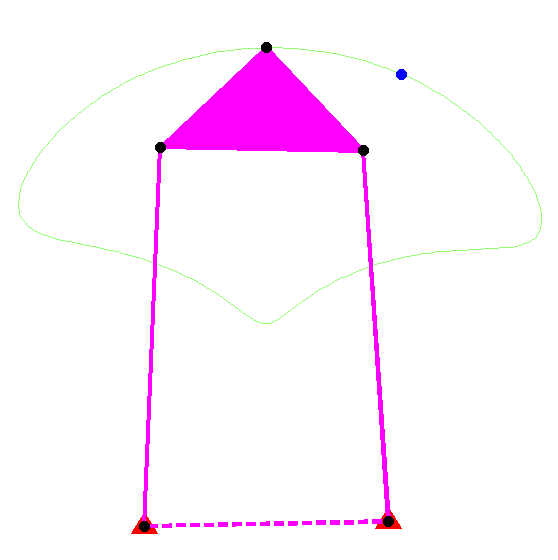}}
\put(100,105){$Y$}
\put(106,9){$B$}
\put(30,105){$X$}
\put(25,8){$A$}
\put(68,140){$0$}
\put(105,132){$P$}
\end{picture}
\end{center}
\caption{A four-bar linkage with corresponding coupler curve}\label{fig:4bar}
\end{figure}

We proceed with a formulation inspired by ~\cite{RF63,WMS}.
Utilizing isotropic coordinates, 
we write $A = (a,\bar{a})$, $B = (b,\bar{b})$,
$X = (x,\bar{x})$, and $Y = (y,\bar{y})$
so that the eight design parameters
are $a,\bar{a},b,\bar{b},x,\bar{x},y,\bar{y}$.  
A point $P$ on the coupler
curve will be denoted $P = (p,\bar{p})$.
Isotropic coordinates yield
a natural action applied to polynomials, denoted $\conj()$, 
that simply swaps~$z$ and $\bar{z}$.
In particular, given design parameters 
$(a,\bar{a},b,\bar{b},x,\bar{x},y,\bar{y})$,
the coupler curve traced out by the corresponding four-bar linkage
is $\Var(G)\subset\CC^2$ where 
$$G = \sum_{j=1}^{15} c_j(p,\bar{p})\cdot f_j(a,\bar{a},b,\bar{b},x,\bar{x},y,\bar{y})$$
with
$$
\begin{array}{lllll}
c_{1} = p^3 \bar{p}^3, & c_{2} = p^3 \bar{p}^2, & 
c_{3} = \conj(c_{2}), & c_{4} = p^3 \bar{p}, & c_{5} = \conj(c_{4}), \\[0.04in]
c_{6} = p^3, & c_{7} = \conj(c_{6}), & c_{8} = p^2\bar{p}^2, & c_{9} = p^2\bar{p}, & c_{10} = \conj(c_{9}), \\[0.04in]
c_{11} = p^2, & c_{12} = \conj(c_{11}), & c_{13} = p\bar{p}, & c_{14} = p, & c_{15} = \conj(c_{14}), \\
\end{array}
$$
and 
$$\begin{array}{l}
f_1 = (x-y)(\bar{y}-\bar{x}), \\
f_2 = (x-y)(\bar{a} \bar{x}-2 \bar{a} \bar{y}+2 \bar{b} \bar{x}-\bar{b} \bar{y}), \\
f_3 = \conj(f_2), \\
f_4 = (x-y)(\bar{a}^2 \bar{y}-2 \bar{a} \bar{b} \bar{x}+2 \bar{a} \bar{b} \bar{y}-\bar{b}^2 \bar{x}),\\
f_5 = \conj(f_4), \\
\end{array}$$
$$\begin{array}{l}
f_6 = \bar{a} \bar{b} (x-y) (\bar{b} \bar{x}-\bar{a}\bar{y}),\\
f_7 = \conj(f_6),\\
f_8 = x^2 \bar{y} (\bar{a}-\bar{y})
+ \bar{x}^2 y (a-y) 
+ x\bar{x}(2y\bar{x}+(\bar{a}-2\bar{b})y+(a-2b)\bar{y}
-a\bar{a}-2a\bar{b}-2\bar{a}b-2b\bar{b}) \\
\,\,\,\,\,\,\,\,\,\,\,\, +~x(b\bar{y}^2 + y\bar{y}(\bar{b}-2\bar{a}) + \bar{y}(a\bar{a}+a\bar{b}+4\bar{a}b + b\bar{b}))- y\bar{y}(2a\bar{a}+2a\bar{b}+2\bar{a}b+b\bar{b})  \\
\,\,\,\,\,\,\,\,\,\,\,\, +~\bar{x}(
\bar{b}y^2 + y\bar{y}(b-2a) + y(a\bar{a}+\bar{a}b+4a\bar{b}+b\bar{b})), \\
f_9 = \bar{a} x^2 \bar{y} (2 \bar{y} - \bar{a} - \bar{b})
+ 2 \bar{b} \bar{x}^2 y (y - a) 
- \bar{a} x \bar{y}(a \bar{b} + 2\bar{a}(b-y)+2b(\bar{b}+\bar{y}))
\\
\,\,\,\,\,\,\,\,\,\,\,\, 
+ x\bar{x}(
\bar{b}(2\bar{b}y + 2 a \bar{a} + 2 \bar{a} b + a \bar{b})
+\bar{y}(\bar{a}+\bar{b})(2b-a-2y)) 
+\bar{a} y \bar{y} (2 a \bar{b}+ \bar{a} b +2 b \bar{b})
\\
\,\,\,\,\,\,\,\,\,\,\,\, 
+~\bar{x} y 
((2a\bar{y}-2a\bar{b}-b\bar{y}-\bar{b}y)(\bar{a}+\bar{b}) -\bar{a} b \bar{b}), \\
f_{10} = \conj(f_9), \\
f_{11} =  \bar{a}^2 x^2 \bar{y} (\bar{b}-\bar{y})
+ \bar{b}^2 \bar{x}^2 y(a-y)
- \bar{a} \bar{b} x \bar{x}(a (\bar{b}-\bar{y}) + 2\bar{y} (b-y) + \bar{b} y)
+ \bar{a}^2 x \bar{y}(b\bar{b}+ b\bar{y}-\bar{b} y) \\
\,\,\,\,\,\,\,\,\,\,\,\, +~ \bar{a}\bar{b} \bar{x} y (\bar{b}(a+y) + \bar{y} (b - 2a)) - \bar{a}^2 b \bar{b} y \bar{y}, \\
f_{12} = \conj(f_{11}), \\
f_{13} = 
\bar{a} x^2 \bar{y} (\bar{a} (2 b - y) + b(\bar{b} - 3\bar{y}) + \bar{b} y)
+ a \bar{x}^2 y (a (2\bar{b} - \bar{y}) + \bar{b}(b-3y) + b\bar{y})
\\
\,\,\,\,\,\,\,\,\,\,\,\, +~ 
x \bar{x} (\bar{b} y^2 (\bar{a}-\bar{b})
+ 3 y \bar{y} (a\bar{b}+\bar{a}b)
+ b \bar{y}^2 (a - b)
- \bar{b} y (\bar{a} b + 2 a \bar{b})
- b \bar{y} (a \bar{b} + 2 \bar{a} b)
- 2 a \bar{a} b \bar{b}) \\
\,\,\,\,\,\,\,\,\,\,\,\, +~
\bar{a} x \bar{y}(a(b\bar{b} + b \bar{y} - \bar{b} y) + b (\bar{a} (b - 2 y) + 2 b \bar{y})) + 
a \bar{x} y (
\bar{a} (b \bar{b} - b \bar{y} + \bar{b} y) \\
\,\,\,\,\,\,\,\,\,\,\,\, +~\bar{b} (a (\bar{b} - 2 \bar{y}) + 2 \bar{b} y)) - 2 a \bar{a} b \bar{b} y \bar{y},
\\
f_{14} = (\bar{a} b x \bar{y} - a \bar{b} \bar{x} y)((\bar{a} - \bar{x})(\bar{b} y - b \bar{y})+ (\bar{b} - \bar{y}) (a \bar{x} - \bar{a} x)), \\
f_{15} = \conj(f_{14}).
\end{array}
$$
In fact, the map from design parameters to coupler curves
is generically a $6$-to-$1$ map.  First,
there is a trivial order $2$ action of relabeling, namely 
the design parameters
$$(a,\bar{a},b,\bar{b},x,\bar{x},y,\bar{y})
\hbox{\,\,\,\,\,and\,\,\,\,\,} (b,\bar{b},a,\bar{a},y,\bar{y},x,\bar{x})$$
yield the same coupler curve and actually describe the same mechanism.
Additionally, there is an order $3$ action of Roberts cognates,
e.g., see \cite[Eq.~17]{WMS}, which yield 
distinct mechanisms that correspond to the same coupler curve.

Returning to Alt's problem, suppose
that $(p_i,\bar{p}_i)$ for $i = 1,\dots,8$ are general.  
Thus, one aims to find the design parameters $(a,\bar{a},b,\bar{b},x,\bar{x},y,\bar{y})$ such that, for $i = 1,\dots,8$, 
$(p_i,\bar{p}_i)$ lies on its coupler curve.
Hence, for $i = 1,\dots,8$ and
$$G_i = \sum_{j=1}^{15} c_j(p_i,\bar{p}_i)\cdot f_j(a,\bar{a},b,\bar{b},x,\bar{x},y,\bar{y}),$$
Alt's problem of counting the number of distinct mechanisms
is equal to one-half of the number of isolated points in $\Var(G_1,\dots,G_8)$
while the number of distinct coupler curves is equal
to one-sixth of the number of isolated points in $\Var(G_1,\dots,G_8)$.
The homotopy continuation computation 
first described in \cite{WMS} provides that
the number of isolated points in $\Var(G_1,\dots,G_8)$~is (at least)~$8652$.

We obtain an upper bound on the number of isolated points
in $\Var(G_1,\dots,G_8)$ by replacing each coefficient
$c_j(p_i,\bar{p}_i)$ with an independent parameter, say $c_{ij}$.
The number of isolated solutions to the resulting
system is exactly $g_0(X,\CC^n)$ from \eqref{eq:g_is}
where $X = \Var(f_1,\dots,f_{15})$.
In fact, it is easy to verify that~$X$ is the union of the following $7$ linear spaces:
\begin{equation}\label{eq:AltBaseLocus}
{\small
\begin{array}{llll}
\Var(x,y), & \Var(x - y, x - b, a - b), & 
\Var(x-y, a-b, \bar{x}-\bar{a}, \bar{y}-\bar{b}),  & \Var(x-y,\bar{x}-\bar{y},a-b,\bar{a}-\bar{b}), \\
\Var(\bar{x},\bar{y}), & \Var(\bar{x}-\bar{y}, \bar{x}-\bar{b}, \bar{a}-\bar{b}), & \Var(\bar{x}-\bar{y}, \bar{a}-\bar{b}, x-a, y-b).
\end{array}
}
\end{equation}
Note that each of these correspond to degenerate linkages that
are not of interest.  Therefore, $g_0(X,\CC^8)/2$ 
is an upper bound on the number of distinct 
four-bar linkages whose coupler curve interpolates~$9$ general points.  Similarly, $g_0(X,\CC^8)/6$ 
is an upper bound on the number of distinct
coupler curves which interpolate $9$ general points.
In addition, one naturally has a coefficient-parameter
homotopy \cite{CoeffParam} for computing
a superset of the isolated points of~$\Var(G_1,\dots,G_8)$
by simply deforming the coefficients 
$c_{ij}$ to $c_j(p_i,\bar{p}_i)$.

For $i = 1,\dots,7$, the number $g_i(X,\CC^8)$
is an upper bound on the degree of
the set of four-bar linkages whose coupler curve interpolates
$9-i$ general points, a variety that has dimension~$i$.
The degrees of these problems 
were first reported in \cite[Table~1]{Alts1}
and are displayed in Table~\ref{table:Degrees}. Similar to the results of \cite{WMS} the computations in \cite{Alts1} do not prove that the degrees can be no larger than the values in Table~\ref{table:Degrees} and we will use modular methods to provide a (probabilistic) confirmation of these values.

\subsection{Relation to projective degrees and characteristic classes}\label{subsec:ProjDegCharClass}
For $X = \Var(f)$, we may also view each $g_i(X,\CC^n)$ as the degree of the pullbacks of general linear spaces under a certain rational map. Consider the rational map \begin{equation}
\begin{aligned}
\rho \colon& \CC^n \dashrightarrow \CC^r\\
&x\mapsto f(x)
\end{aligned}
\end{equation} 
and let $\mathcal{L}_i\subset \CC^r$ be a general linear space
passing through the origin
of dimension $r-(n-i)$ in $\CC^r$, i.e.,~having codimension $n-i$ in $\CC^r$.
Let $B_{n-i} \in \CC^{(n-i)\times r}$ be a matrix
such that $\mathcal{L}_i = \Null(B_{n-i})$.
Then, 
$$
g_i(X,\CC^n)=\deg(\rho^{-1}(\mathcal{L}_i)\setminus X) = \deg(\Var(B_{n-i}\cdot f)\setminus X). 
$$
The number $g_0(X,\CC^n)$ is also 
the volume of the Newton-Okounkov body \cite{KK12,LM09,Okunkov96}
corresponding to $f$.

More generally, if $X=\Var(F)\subset Y$ are subschemes 
inside of $Z$, which is a product of affine and projective spaces,
and $F(x)$ is a collection of $r$ polynomials in the coordinate ring of~$Z$, we can consider the map 
\begin{equation}
\begin{aligned}
\rho_X \colon& Y \dashrightarrow \CC^r \\
&x\mapsto F(x).
\end{aligned}
\end{equation} 
In this case, we wish to compute the numbers
\begin{equation}\label{eq:giXY}
g_i(X,Y)=\deg(\rho_X^{-1}(\mathcal{L}_i)\setminus X)=\deg \left(Y\cap \Var(B_{n-i} \cdot F)\setminus X \right).
\end{equation}
If $Z$ is a projective variety, then the polynomials making up $F$ are homogeneous, $\CC^r$ is replaced by $\pp^{r-1}$, a set of polynomials defining $X=\Var(F)$ can be taken to have the same degree (without changing $X\subset Z$), and the numbers $g_i(X,Y)$ are the {\em projective degrees} of $X$ in $Y$ (see \cite{HH18} for the general case as well as \cite[Example~19.4]{Harris} 
and~\cite{H16,H17} for the special case $g_i(X,\pp^{n-1})$). 
Moreover, it is shown in \cite{H16} that when $X$ is 
a subscheme of~$\pp^{n-1}$, the (pushforward of the) Chern-Schwartz-McPherson class in the Chow ring $A^*(\pp^{n-1})$, namely $c_{SM}(X)$, 
and hence the topological Euler characteristic, namely $\chi(X)$, 
are completely determined by appropriate projective degrees. 
Note that this also gives (the pushforward of) the Chern class of the tangent bundle, $c(T_X)\cap [X] \in A^*(\pp^{n-1})$, since this agrees with $c_{SM}(X)$ whenever~$X$ is smooth, i.e., whenever the Chern class is defined.
For more details, see \cite[\S2.2]{Aluffi}. 
Similarly, if $X\subset Y$ are subschemes of a smooth projective toric variety $T$, it is shown in \cite{HH18} that the projective degrees~$g_i(X,Y)$ completely determine the (pushforward of the) Segre class of $X$ in~$Y$, namely~$s(X,Y)$, in the Chow ring $A^*(T)$. 
This Segre class in turn gives rise to many other invariants of potential interest such as Samuel's algebraic multiplicity of~$Y$ along~$X$, 
namely~$e_XY$, e.g., see \cite[\S4.3]{Fulton} and \cite[\S5]{HH18}, 
and polar classes, e.g., see \cite[Ex.~4.4.5]{Fulton}. These Segre classes have also been used to give a Gr\"obner free test of pairwise containment of projective varieties, e.g.,~see~\cite[\S6]{HH18}.

\subsection{Plane conics}\label{subSection:GeomExample}

To illustrate several possible situations, 
we consider the enumerative geometry problem of counting 
the number of plane conics in $\CC^3$ passing
through the origin and meeting~$6$ general lines.  
First, we associate a plane conic $C\subset\CC^3$ passing through 
the origin with $(a,b)\in \pp^4\times\pp^2$ where 
$$C = \Var(a_0 x^2 + a_1 xy + a_2 y^2 + a_3 x + a_4 y, b_0 z + b_1 x + b_2 y ).$$
We parameterize the $6$ lines by $v_i+tw_i$
where $v_i,w_i\in \CC^3$ are general for $i=1,\dots,6$.  
Then, the enumerative geometry problem is to count the number
of isolated points in $\Var(G_1,\dots,G_6)$,
which is $18$, where $G_i = \sum_{j=1}^{14} c_{j}(v_i,w_i)\cdot f_j(a,b)$ with
$$\begin{array}{lllcclll}
c_{1} &=& (v_1 w_3 - v_3 w_1)^2 &&&
c_{2} &=& (v_2 w_3 - v_3 w_2) (v_1 w_3 - v_3 w_1) \\
c_{3} &=& (v_2 w_3 - v_3 w_2)^2 &&&
c_{4} &=& w_3 (v_1 w_3 - v_3 w_1) \\
c_{5} &=& w_3 (v_2 w_3 - v_3 w_2) &&&
c_{6} &=& w_1 (v_1 w_3 - v_3 w_1) \\
c_{7} &=& 2 v_1 w_2 w_3 - v_2 w_1 w_3 - v_3 w_1 w_2 &&&
c_{8} &=& 2 v_2 w_1 w_3 - v_1 w_2 w_3 - v_3 w_1 w_2 \\
c_{9} &=& w_2 (v_2 w_3 - v_3 w_2) &&&
c_{10} &=& (v_3 w_1 - v_1 w_3) (v_1 w_2 - v_2 w_1) \\
c_{11} &=& (v_2 w_3 - v_3 w_2) (v_1 w_2 - v_2 w_1) &&&
c_{12} &=& w_1 (v_2 w_1 - v_1 w_2) \\
c_{13} &=& w_2 (v_2 w_1 - v_1 w_2) &&&
c_{14} &=& (v_1 w_2 - v_2 w_1)^2, \\
\end{array}
$$
and
$$\begin{array}{lllcclll}
f_1 &=& a_0 b_0^2 &&&
f_2 &=& a_1 b_0^2 \\
f_3 &=& a_2 b_0^2 &&&
f_4 &=& a_3 b_0^2 \\
f_5 &=& a_4 b_0^2 &&&
f_6 &=& a_3 b_0 b_1 \\
f_7 &=& a_3 b_0 b_2 &&&
f_8 &=& a_4 b_0 b_1 \\
f_9 &=& a_4 b_0 b_2 &&&
f_{10} &=& b_0 (a_1 b_1 - 2 a_0 b_2) \\
f_{11} &=& b_0 (a_1 b_2 - 2 a_2 b_1) &&&
f_{12} &=& b_1 (a_4 b_1 - a_3 b_2) \\
f_{13} &=& b_2 (a_4 b_1 - a_3 b_2) &&&
f_{14} &=& a_0 b_2^2 + a_2 b_1^2 - a_1 b_1 b_2. \\
\end{array}
$$
For $F = [f_1,\dots,f_{14}]^T$ and $Y = Z = \pp^4\times\pp^2$, 
we have 
$$X = \Var(F) = 
\Var(b_0,
a_4 b_1-a_3 b_2,
a_2 b_1^2-a_1 b_1 b_2+a_0 b_2^2,
a_2 a_3 b_1-a_1 a_3 b_2+a_0 a_4 b_2,
a_2 a_3^2-a_1 a_3 a_4+a_0 a_4^2)$$
which corresponds with planes $\Var(b_1x+b_2y)$ 
that are not of interest.
In fact, $g_0(X,Y) = 18$ providing a sharp upper bound
on the number of isolated points in $\Var(G_1,\dots,G_6)$.

Alternatively, we could work on the affine patch $Y = \CC^4\times\CC^2=\CC^6$ 
defined by $a_0=b_0=1$, i.e., plane cubics defined by
$$\Var(x^2 + a_1 xy + a_2 y^2 + a_3 x + a_4 y, z + b_1 x + b_2 y).$$
Hence, 
\begin{equation}\label{eq:CubicsAffine}
\begin{array}{l} 
F = [1, a_1, a_2, a_3, a_4, a_3 b_1, a_3 b_2, a_4 b_1, a_4 b_2, a_1 b_1 - 2 b_2, \\
\,\,\,\,\,\,\,\,\,\,\,\,\,\,\,\,\,\,\,\,\,\,\,\,\,\,\,\, a_1 b_2 - 2 a_2 b_1,
 b_1 (a_4 b_1 - a_3 b_2), b_2 (a_4 b_1 - a_3 b_2), a_2 b_1^2 - a_1 b_1 b_2 + b_2^2]^T\end{array}
\end{equation}
with $X = \Var(F) = \emptyset$ and $g_0(X,Y) = 18$, which we will verify
in Section~\ref{subSection:HilbertExample}.

Finally, one could work on $Y = \pp^6$, i.e., plane cubics defined by
$$\Var(a_0x^2 + a_1 xy + a_2 y^2 + a_3x + a_4 y, a_0z + b_1 x + b_2 y),$$
with 
$$\begin{array}{l} 
F = [a_0^3, a_0^2 a_1, a_0^2 a_2, a_0^2 a_3, a_0^2 a_4, a_0 a_3 b_1, a_0 a_3 b_2, a_0 a_4 b_1, a_0 a_4 b_2, a_0 (a_1 b_1 - 2 a_0 b_2), \\
\,\,\,\,\,\,\,\,\,\,\,\,\,\,\,\,\,\,\,\,\,\,\,\,\,\,\,\,
a_0(a_1 b_2 - 2 a_2 b_1),
 b_1 (a_4 b_1 - a_3 b_2), b_2 (a_4 b_1 - a_3 b_2), a_2 b_1^2 - a_1 b_1 b_2 + b_2^2]^T\end{array}$$
so that 
$$X = \Var(F) = 
\Var(a_0,a_4b_1-a_3b_2,a_2b_1-a_1b_2,a_2a_3-a_1a_4) \cup \Var(a_0,b_1,b_2)
\cup \Var(a_0,a_3,b_1),$$
which all correspond to degenerate cases that are not of interest,
and $g_0(X,Y) = 18$.

\section{Verification using Hilbert functions}\label{sec:HilbertFunctions}

For homogeneous ideals, modular computations can be utilized
to provide upper bounds on the Hilbert function \cite[Thm.~5.3]{ModGB}.
However, since the ideals of particular interest are 
parameterized ideals which are generically radical and 
zero-dimensional but need not be homogeneous,
this section compares affine Hilbert functions computed
using numerical methods (yielding lower bounds) and
symbolic methods (yielding upper bounds).
In particular, when the upper and lower bounds agree, 
one has computed the generic value of the Hilbert function.

Consider the field $\kk = \QQ(p_1,\dots,p_\ell)$
consisting of rational functions in
the parameters $\mbox{$p=(p_1,\dots,p_\ell)$}$~with rational coefficients.
Let $R = \kk[x_1,\dots,x_n]$ be the ring of polynomials
in the variables $x_1,\dots,x_n$ whose coefficients are in $\kk$.
Fix $p^* = (p_1^*,\dots,p_\ell^*)\in\CC^\ell$.
For a polynomial $h\in R$, let $h_{p^*}\in \CC[x_1,\dots,x_n]$ be the polynomial 
obtained by specializing the parameters~$p$ to~$p^*$ 
whenever all coefficients of $h$ are defined at $p^*$.  
Suppose that $h_1,\dots,h_n \in R$ generate an ideal $I = \langle h_1,\dots,h_n\rangle\subset R$ that is generically radical and zero-dimensional.  
That is, for general values $p^*\in\CC^\ell$, 
$I_{p^*} = \langle {h_{1}}_{p^*},\dots,{h_{n}}_{p^*}\rangle\subset\CC[x_1,\dots,x_n]$
with $\#\Var(I_{p^*}) = \deg I_{p^*} < \infty$.

For $d\geq 0$, let $R_{\leq d}$ and $I_{\leq d}$
denote the vector space over $\kk$ 
of polynomials in $R$ and $I$, respectively,
of degree at most $d$.  In particular, $\dim_{\kk} (R_{\leq d}) = \binom{n+d}{d}$. 
The {\em affine Hilbert function} of $I$ is the function
${\rm HF}_I:\ZZ_{\geq 0}\to \ZZ_{\geq 0}$ defined by
\begin{equation}
{\rm HF}_I(d):= \dim_\kk(\kk[x_1,\dots,x_n]_{\leq d})-\dim_\kk(I_{\leq d})
= \binom{n+d}{d} - \dim_\kk(I_{\leq d}).
\end{equation}
Moreover, for general $p^*\in\CC^{\ell}$,
$${\rm HF}_{I_{p^*}}(d) := \binom{n+d}{d} - \dim_{\CC}((I_{p^*})_{\leq d}) = {\rm HF}_I(d).$$

\subsection{Numerical Hilbert functions}

With the setup described above, we develop a certified approach 
for computing lower bounds on the Hilbert
function of $I$ that combines
\cite{NumHilbert} with a method 
that guarantees the existence
of a solution to a polynomial system,
one such approach being $\alpha$-theory~\cite[Ch.~8]{alphaTheory}.  
We first introduce the exact computation
followed by the certified numerical approach
for computing lower bounds
using a well-constrained system. 

Suppose that $p^*\in\CC^\ell$ and $V_{p^*}=\{q_1^*,\dots,q_k^*\}\subset\Var({h_{1}}_{p^*},\dots,{h_{n}}_{p^*})\subset\CC^n$ consists of nonsingular solutions.  Let $I(V_{p^*})\subset\CC[x_1,\dots,x_n]$ be the ideal of polynomials vanishing on~$V_{p^*}$.  
By the implicit function theorem, each point $q_i^*\in V_{p^*}$
lifts locally to a solution in $\Var(I)$, say $q_i(p)$
with $q_i^* = q_i(p^*)$.

Let $\nu_d:\CC^n\rightarrow\CC^{\binom{n+d}{d}}$
be the degree $\leq d$ Veronese embedding, i.e.,
$$\nu_d(x) = \left[\begin{array}{ccccccccccccc}
1 & x_1 & \cdots & x_n & x_1^2 & x_1 x_2 & \cdots & x_n^2 & \cdots & x_1^d & x_1^{d-1} x_2 & \cdots & x_n^d\end{array}\right].$$
Moreover, let $M_d:\left(\CC^n\right)^k\rightarrow
\CC^{k\times \binom{n+d}{d}}$ be 
$$M_d(y_1,\dots,y_k) = \left[\begin{array}{c}
\nu_d(y_1) \\ \vdots \\ \nu_d(y_k) \end{array}\right].$$
With this setup, $\rank M_d(q_1^*,\dots,q_k^*)
\leq \rank M_d(q_1(p),\dots,q_k(p)) \leq {\rm HF}_I(d)$.
The following theorem adds certification 
to produce guaranteed lower bounds on $HF_I(d)$.

\begin{theorem}\label{thm:Numeric_Hilbert_LowerBound}
With the setup above, let $p^*\in\CC^\ell$, 
$q_1,\dots,q_k\in\CC^n$, and $S_d(q_1,\dots,q_k)$ 
be a $k \times k$ submatrix of $M_d(q_1,\dots,q_k)$.
Then, $HF_I(d) \geq k$ provided that
there exists a nonsingular solution to the well-constrained system
$$\sG(y_1,\dots,y_k,\Lambda) = \left[
\begin{array}{c}
G_{p^*}(y_1) \\ \vdots \\ G_{p^*}(y_k) \\ \Lambda\cdot S_d(y_1,\dots,y_k) - {\bf I}
\end{array}\right]$$
where $G_{p^*}(x) = [h_{1p^*}(x),\dots,h_{np^*}(x)]^T$ and $\bf I$ is the $k\times k$ identity~matrix.
\end{theorem}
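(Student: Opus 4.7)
The plan is to extract from a nonsingular solution of $\sG$ a configuration of $k$ points that, after lifting over $\kk$ via the implicit function theorem, witnesses $\mathrm{HF}_I(d) \geq k$ by two rank comparisons: first, a rank-at-least-$k$ lower bound coming from the invertibility of $S_d$, and second, the a priori upper bound $\rank M_d \leq \mathrm{HF}_I(d)$ coming from the evaluation-vanishing on $I$.

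First, I would analyze the block structure of the Jacobian of $\sG$ at a nonsingular solution $(y_1^*,\dots,y_k^*,\Lambda^*)$. Ordering variables as $(y_1,\dots,y_k,\Lambda)$ and listing the equations $G_{p^*}(y_i)=0$ before $\Lambda\cdot S_d - \mathbf{I} = 0$, the Jacobian is block lower triangular. The upper-left diagonal block is itself block diagonal with blocks $DG_{p^*}(y_i^*)$ (since $G_{p^*}(y_i)$ only depends on $y_i$), and the lower-right diagonal block is the derivative of $\Lambda\cdot S_d - \mathbf{I}$ with respect to the entries of $\Lambda$, which is $I_k \otimes S_d(y_1^*,\dots,y_k^*)$ in an appropriate ordering. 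Nonsingularity of the full Jacobian therefore forces both (i) each $DG_{p^*}(y_i^*)$ to be nonsingular, so each $y_i^*$ is an isolated nonsingular root of $G_{p^*}$, and (ii) $S_d(y_1^*,\dots,y_k^*)$ to be invertible.

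Second, setting $q_i^* := y_i^*$, by (i) the points $q_1^*,\dots,q_k^*$ lie in $V_{p^*}$ and are nonsingular, so by the implicit function theorem (as recalled in the excerpt) they lift to analytic branches $q_i(p) \in \Var(I)$ with $q_i(p^*) = q_i^*$. Consider the matrix $M_d(q_1(p),\dots,q_k(p))$ with entries in the local ring at $p^*$; its specialization at $p^*$ contains the invertible submatrix $S_d(q_1^*,\dots,q_k^*)$ by (ii). Lower semicontinuity of matrix rank under specialization then gives
\[
\rank M_d(q_1(p),\dots,q_k(p)) \;\geq\; \rank M_d(q_1^*,\dots,q_k^*) \;\geq\; k.
\]

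Third, I would establish the matching upper bound $\rank M_d(q_1(p),\dots,q_k(p)) \leq \mathrm{HF}_I(d)$ by dualizing: any $f \in I_{\leq d}$, written as $f = c \cdot \nu_d$ for a coefficient vector $c \in \kk^{\binom{n+d}{d}}$, satisfies $M_d(q_1(p),\dots,q_k(p)) \cdot c^T = 0$ because each $q_i(p) \in \Var(I)$. Hence the (coefficient space image of) $I_{\leq d}$ sits inside $\ker M_d$, so
\[
\rank M_d(q_1(p),\dots,q_k(p)) \;\leq\; \binom{n+d}{d} - \dim_\kk I_{\leq d} \;=\; \mathrm{HF}_I(d),
\]
and chaining the inequalities yields $k \leq \mathrm{HF}_I(d)$.

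The main obstacle is the first step: carefully unpacking the Jacobian of $\sG$ to confirm the decoupling into the two invertibility conditions, since the rows enforcing $\Lambda \cdot S_d - \mathbf{I} = 0$ do depend on $y$ (via $S_d$) and one has to argue that these off-diagonal $y$-dependencies are harmless thanks to the triangular block structure. Once that decoupling is in place, the remaining steps — invoking the implicit function theorem, applying semicontinuity of rank, and using vanishing on $I$ to upper-bound $\rank M_d$ — are standard.
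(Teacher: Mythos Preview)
Your proposal is correct and follows essentially the same route as the paper: extract from the nonsingular solution that each $q_i^*$ is a nonsingular root of $G_{p^*}$ and that $S_d(q_1^*,\dots,q_k^*)$ is invertible, then chain the rank inequalities $k = \rank S_d \leq \rank M_d \leq \mathrm{HF}_I(d)$. The paper's proof simply asserts that ``the structure of $\sG$'' yields the two invertibility conclusions, whereas you spell out the block lower-triangular Jacobian analysis; similarly, the paper records the bound $\rank M_d(q_1(p),\dots,q_k(p)) \leq \mathrm{HF}_I(d)$ in the setup without proof, while you supply the evaluation-kernel argument---so your write-up is more explicit but not a different approach.
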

\begin{proof}
Since $S_d(q_1,\dots,q_k)$ is a submatrix
of $M_d(q_1,\dots,q_k)$ and both have $k$ rows,
$$k \geq \rank M_d(q_1,\dots,q_k) \geq \rank S_d(q_1,\dots,q_k).$$
Suppose that $(q_1^*,\dots,q_k^*,\Lambda^*)$ is a nonsingular solution
of $\sG = 0$.  
Note that $\sG$ is well-constrained consisting
of $kn+k^2$ polynomials in $kn+k^2$ variables.
Thus, the structure of $\sG$ yields that 
each $q_i^*$ is a nonsingular solution of
$G_{p^*} = 0$ and $\Lambda^* = S_d(q_1^*,\dots,q_k^*)^{-1}$.
Therefore, 
$${\rm HF}_I(d) \geq \rank M_d(q_1^*,\dots,q_k^*) \geq \rank S_d(q_1^*,\dots,q_k^*) = k.$$
\end{proof}

In practice, one uses numerical approximations of
known solutions to select $k$ and the 
corresponding $k\times k$ submatrix which one expects to be invertible.
Therefore, starting from numerical approximations,
one can use a local certification routine,
e.g., $\alpha$-theory~\cite[Ch.~8]{alphaTheory},
to prove the existence of a nonsingular solution 
of $\sG = 0$ thereby certifying ${\rm HF}_I(d)\geq k$.

For each $d\geq 0$, let ${\rm HF}^{\rm num}_I(d)$ 
be the largest value $k$ for which one
can certify ${\rm HF}_I(d) \geq k$
based on known numerical data.
Hence, ${\rm HF}^{\rm num}_I(d) \leq {\rm HF}_I(d)$.

An upper bound on ${\rm HF}_I(d)$
is developed in Section~\ref{subSec:SymbolicReduction}
with an example in Section~\ref{subSection:HilbertExample}.

\subsection{Symbolic reduction}\label{subSec:SymbolicReduction}

One option for computing the Hilbert
function and thus $\deg(I)$ is to compute a Gr\"obner
basis for $I=\langle h_1,\dots, h_n\rangle\subset R$.  However, when this is not practical,
partial information provides upper bounds on the Hilbert
function.  For example, for each $d\geq0$ and $e\geq 0$,
consider the linear space 
$$J_d^e = \left(\Span_\kk\left(\bigcup_{i=1}^n \langle h_i \rangle_{\leq d+e}\right)\right)_{\leq d}.$$
Clearly, $J_d^e \subset I_{\leq d}$ showing that 
$${\rm HF}_I(d) \leq \binom{n+d}{d} - \dim_\kk J_d^e.$$
Moreover, $J_d^e\subset J_d^{e+1} \subset I_{\leq d}$ and 
there exists $e^*\geq 0$ such that $J_d^{e} = I_{\leq d}$ for all $e\geq e^*$
showing that this upper bound eventually becomes sharp.
In fact, one such stopping criterion is if~$e^*$ such that
${\rm HF}^{\rm num}_I(d) = \binom{n+d}{d}-\dim_\kk J_d^{e^*}$,
then ${\rm HF}^{\rm num}_I(d) = {\rm HF}_I(d) = \binom{n+d}{d}-\dim_\kk J_d^e$
for~$e\geq e^*$.

Let $p^*\in\QQ^{\ell}$. Rather than taking an exhaustive approach that considers
all possible polynomials, one could instead repeat
similar computations on $I$ to the modular computations 
performed when computing a Gr\"obner basis for 
$I_{p^*}$ over a finite field. 
Thus, one uses these computations to 
guide the computation of the polynomials in $I_{\leq d}$
via the Gr\"obner trace algorithm~\cite{GBTrace}.
Since one is performing exact computations on $I$, 
one maintains the upper bounds~on~${\rm HF}_I(d)$.

\subsection{Example}\label{subSection:HilbertExample}

We demonstrate the lower and upper bounds
via the parameterized system 
$$G(a,b;P) = P\cdot F(a,b)$$
where $P$ is a $6\times 14$ matrix of parameters
and $F$ is as in \eqref{eq:CubicsAffine}.
Let $I = \langle G\rangle$ and consider
$$P^* = \left[\begin{array}{rrrrrrrrrrrrrr}
1 & -2 & 2 & -4 & -4 & -5 & -3 & 1 & -1 & -1 & -2 & -3 & 1 & -5 \\
0 & 0 & 3 & 4 & 5 & -1 & -3 & -4 & -5 & -5 & 4 & -1 & -5 & -4 \\
-5 & -4 & -1 & 0 & -5 & -3 & -4 & 4 & -3 & 4 & -1 & -4 & -3 & 2 \\
-2 & 1 & -5 & 5 & 3 & 3 & -4 & 1 & -4 & 5 & -4 & -4 & -2 & 3 \\
-4 & -3 & -3 & -5 & 3 & -1 & 4 & -2 & -3 & 0 & 3 & 5 & 4 & 2 \\
3 & 2 & 5 & -1 & 4 & 5 & 1 & 0 & -3 & 0 & -1 & 5 & -5 & -1 
\end{array}\right]$$
with $G_{P^*}(a,b) = P^*\cdot F(a,b)$.
Using {\tt Bertini} \cite{Bertini,BertiniBook}, we computed
numerical approximations of~18 points in $\Var(G_{P^*})$.
From this data, we applied Theorem~\ref{thm:Numeric_Hilbert_LowerBound}
for $d = 1,2$ to yield
\begin{equation}\label{eq:HilbertNumF}
{\rm HF}_I^{\rm num}(1) = 7 \,\,\,\,\,\, \hbox{and} \,\,\,\,\,\,
{\rm HF}_I^{\rm num}(d) = 18 \hbox{~for~}d \geq 2
\end{equation}
as follows.  For $d = 1$, we selected $7$ points and 
utilized {\tt alphaCertified} \cite{alphaCertified}
with Theorem~\ref{thm:Numeric_Hilbert_LowerBound}
to certify 
$$7 = {\rm HF}_I^{\rm num}(1) \leq {\rm HF}_I(1) \leq \binom{6+1}{1} = 7.$$
Therefore, we know ${\rm HF}_I^{\rm num}(1) = {\rm HF}_I(1) = 7$.  

For $d = 2$, we utilized all $18$ known points 
with the subset of columns corresponding to the following $18$ monomials:
$$a_1, a_2, a_3, a_4, b_1, b_2, a_1b_1, a_1b_2, a_2a_4, a_2b_2, a_3^2, a_3a_4, a_3b_1, a_3b_2, a_4b_1, b_1^2, b_1b_2, b_2^2.$$
This certification via {\tt alphaCertified}
and Theorem~\ref{thm:Numeric_Hilbert_LowerBound}
shows that ${\rm HF}_I^{\rm num}(2) = 18$.
Since we utilized all $18$ points in this computation,
we have ${\rm HF}_I^{\rm num}(d) = 18$ for $d\geq 2$.

Next, we turn to upper bounds for ${\rm HF}_I(d)$
for $d = 2,3$.  The following tables summarize
the exhaustive approach for symbolic reduction described
in Section~\ref{subSec:SymbolicReduction} for $d=2,3$
and various values of $e\geq0$.
$$\begin{array}{cccc}
\begin{array}{c|c|c} 
e & \dim_{\kk} J_2^e & \binom{6+2}{2}-\dim_{\kk} J_2^e \\[0.03in] \hline
0 & 0 & 28 \\
1 & 3 & 25\\
2 & 3 & 25\\
3 & 5 & 23\\
4 & 9 & 19\\
5 & 10 & 18\\
\end{array} &&& 
\begin{array}{c|c|c} 
e & \dim_{\kk} J_3^e & \binom{6+3}{3}-\dim_{\kk} J_3^e \\[0.03in] \hline
0 & 6 & 78\\
1 & 25 & 59\\
2 & 38 & 46 \\
3 & 63 & 21 \\
4 & 66 & 18  
\end{array} 
\end{array}
$$
Hence, ${\rm HF}_I(2) \leq {\rm HF}_I(3) \leq 18$.
Combining with \eqref{eq:HilbertNumF} shows that
${\rm HF}_I(2) = {\rm HF}_I(3) = 18$
and thus ${\rm HF}_I(d) = 18$ for $d \geq 2$, i.e., 
${\rm HF}_I = {\rm HF}_I^{\rm num}$.
In particular, this shows that there are 
generically $18$ solutions to $G = 0$,
i.e., $g_0(X,\CC^6) = 18$ where $X = \Var(F)$
with $F$ as in \eqref{eq:CubicsAffine}.

\section{Probabilistic analysis}\label{sec:ProbRandomIsGeneral}

The theory in the previous sections relied upon general choices
of parameters.  In this section, we give explicit characterization 
of when constants are sufficiently general for our computations. 
Since Alt's problem is formulated affinely in Section~\ref{subSection:AltsIntro},
we focus on the affine case but note that 
the results proved here have analogs in the other 
settings discussed in Section~\ref{section:background}. 

Let $R=\QQ[x_1,\dots,x_n]$, $f=[f_1,\dots, f_r]^T\subset R$,
and $i\in \{0,\dots,n-1\}$.  Consider the ideal
\begin{equation}
I_i(\Theta,\Lambda,\mu)=\langle \Theta \cdot x -{\bf 1}, \Lambda\cdot f, 1 -(\mu\cdot f)T\rangle
\label{eq:I_lambda_theta}
\end{equation}
where $\Theta\in \QQ^{i\times n}$, $\Lambda\in \QQ^{(n-i)\times r}$, and $\mu\in \QQ^r$.  
The following is an affine version of \cite[Thm.~4.1]{H16}. 
\begin{theorem}
Let $f=[f_1,\dots, f_r]^T\subset R = \QQ[x_1,\dots,x_n]$, $i\in\{0,\dots,n-1\}$,
and $X = \Var(f)$.
For general choices of $\Theta\in \QQ^{i\times n}$, $\Lambda\in \QQ^{(n-i)\times r}$, and $\mu\in \QQ^r$,
$I_i(\Theta,\Lambda,\mu)\subset R[T]$ as in \eqref{eq:I_lambda_theta} is either the unit ideal or
has dimension $0$. In either case,
\begin{equation}
g_i(X,\CC^n)=\dim_{\QQ} \left(R[T]/I_i(\Theta,\Lambda,\mu)\right) \label{eq:g_iV2}
\end{equation} 
is independent of $\Theta$, $\Lambda$ and $\mu$ (provided they are general).  \label{thm:Computeg_i}
\end{theorem}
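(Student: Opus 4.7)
The plan is to apply the Rabinowitz trick geometrically: I would identify $\Var(I_i(\Theta,\Lambda,\mu)) \subset \CC^{n+1}$ with $\Var(\Theta\cdot x - \mathbf{1}, \Lambda\cdot f)\setminus X \subset \CC^n$ via the projection $(x,T)\mapsto x$, and then transport known facts about the latter, paralleling the projective argument of \cite[Thm.~4.1]{H16}.

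First, I would observe that the generator $1-(\mu\cdot f(x))T$ forces $\mu\cdot f(x)\neq 0$ at every $(x,T)\in\Var(I_i)$, which in particular implies $f(x)\neq 0$, i.e.\ $x\notin X$. For generic $\mu\in\QQ^r$, the linear form $\mu\cdot f$ does not vanish at any of the (finitely many) points of $\Var(\Theta\cdot x-\mathbf{1},\Lambda\cdot f)\setminus X$, so $T=1/(\mu\cdot f(x))$ is uniquely determined by $x$. This produces a set-theoretic bijection between $\Var(I_i(\Theta,\Lambda,\mu))$ and $\Var(\Theta\cdot x-\mathbf{1},\Lambda\cdot f)\setminus X$.

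Second, I would invoke an affine Bertini-type argument — using the rational map $\rho$ of Section~\ref{subsec:ProjDegCharClass} restricted to $\CC^n\setminus X$ together with the classical Bertini theorem for the affine linear slice $\Theta\cdot x=\mathbf{1}$ — to conclude that for generic $\Theta$ and $\Lambda$, the set $\Var(\Theta\cdot x-\mathbf{1},\Lambda\cdot f)\setminus X$ is either empty or consists of finitely many reduced points, with cardinality $g_i(X,\CC^n)$. Since $\QQ$-points are Zariski-dense in the parameter space, a generic choice can be realized over~$\QQ$. I would then verify that adjoining $T$ with the equation $1-(\mu\cdot f)T$ preserves reducedness and zero-dimensionality: the partial derivative with respect to $T$ equals $-\mu\cdot f$, which is nonvanishing at each point of $\Var(I_i)$, so by the Jacobian criterion the resulting scheme in $\CC^{n+1}$ is again reduced. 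Thus $I_i(\Theta,\Lambda,\mu)$ is either the unit ideal (when the intersection is empty, so $g_i(X,\CC^n)=0$) or a 0-dimensional radical ideal.

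To translate the resulting point count into the claimed formula, I would use faithful flatness of $\CC/\QQ$ to write
\[
\dim_\QQ\bigl(R[T]/I_i(\Theta,\Lambda,\mu)\bigr)=\dim_\CC\bigl(\CC[x,T]/(I_i\cdot\CC[x,T])\bigr),
\]
and recall that for a zero-dimensional radical ideal in $\CC[x,T]$, the right-hand side equals the number of $\CC$-points of $\Var(I_i)$. By the bijection of the first step, this equals the number of points of $\Var(\Theta\cdot x-\mathbf{1},\Lambda\cdot f)\setminus X$, which by the second step equals $g_i(X,\CC^n)$ and is independent of the generic choices. The main obstacle will be the Bertini step: one must rule out both embedded components sitting over $X$ and any failure of transversality away from $X$, for which it is essential that removing the base locus $X$ leaves a locus on which $\rho$ is a morphism; once this is in hand, the remaining arguments with the Rabinowitz variable and base change to $\CC$ are essentially formal.
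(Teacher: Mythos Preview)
Your proposal is correct and follows essentially the same approach the paper indicates: the paper's proof is a one-line reference stating that the result ``immediately follows from Bertini's Theorem and the Rabinowitz trick'' with the argument being ``identical to that of \cite[Thm.~4.1]{H16} with $\pp^n$ replaced by $\CC^n$,'' and you have simply spelled out those two ingredients (the bijection via the Rabinowitz variable, the reducedness and finite cardinality via Bertini, and the passage between $\QQ$ and $\CC$) in more detail than the paper does. Your added remarks on the Jacobian criterion for the $T$-variable and on faithful flatness are not in the paper but are the natural way to make the terse argument rigorous.
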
 \begin{proof}
This immediately follows from Bertini's Theorem and the Rabinowitz trick. 
The proof is identical to that of \cite[Thm.~4.1]{H16} 
with $\pp^n$ replaced by $\CC^n$. 
\end{proof}

In practice, when computing the numbers $g_i(X,\CC^n)$, 
the constants $\Theta$, $\Lambda$, and $\mu$ are chosen randomly from a finite set 
of rational numbers, e.g., all rational numbers of bounded height 
or a subset of floating-point numbers of a given bit size.
Therefore, even working over $\QQ$,
the computation of the integers $g_i(X,\CC^n)$ 
in Theorem~\ref{thm:Computeg_i} is probabilistic. 
For our random choices in the following,
we will utilize a uniform distribution on the chosen finite set. 

Since the results below depend on generators with
integer coefficients, we will assume
that, without loss of generality, 
$f=[f_1,\dots,f_r]^T\subset\ZZ[x_1,\dots,x_n]$
and that the coefficients in each $f_i$ are relatively prime.

\subsection{Lucky primes for randomly constructed ideals}

To simplify computations, we can employ modular methods
which require the notion of lucky primes as in \cite[\S5.1]{ModGB} and~\cite{Pauer}. 

The following is from \cite[Defn.~4.1]{Pauer}. 

\begin{definition}\label{def:PauerLucky}
For $h_1,\dots,h_k\in\ZZ[x_1,\dots,x_n]$,
let $I=\langle h_1,\dots, h_k \rangle\subset \QQ[x_1,\dots, x_n]$ and $I_\ZZ=\langle h_1,\dots, h_k \rangle\subset \ZZ[x_1,\dots, x_n]$ be ideals 
with Gr\"obner bases $G$ and $G_\ZZ$, respectively. 
A prime $p$ is {\em Pauer lucky} with respect to $I$ if and only if $p$ 
does not divide any of the leading coefficients of the polynomials in $G_\ZZ$. Otherwise, $p$ is {\em Pauer unlucky} with respect to $I$.
\end{definition}

For $h_1,\dots,h_k\in\ZZ[x_1,\dots,x_n]$ 
and $I= \langle h_1,\dots, h_k\rangle\subset \QQ[x_1,\dots, x_n]$,
let $LM_\QQ(I)$ denote the set of leading monomials of $I$. 
For a prime $p$, let $LM_{\ZZ_p}(I)$ denote the leading monomials of $\langle h_1,\dots, h_k \rangle\subset \ZZ_p[x_1,\dots, x_n]$. 
The following is \cite[Prop.~4.1]{Pauer} which states 
that if a prime $p$ is Pauer lucky, 
then the ideal of leading terms computed over $\QQ$ and $\ZZ_p$ 
agree.
\begin{proposition}[Prop.~4.1 of \cite{Pauer}]
With the setup as above, if a prime $p$ is
Pauer lucky with respect to $I$, then $LM_\QQ(I)=LM_{\ZZ_p}(I)$.
\end{proposition}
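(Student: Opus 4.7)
The plan is to use $G_\ZZ$, the Gröbner basis of $I_\ZZ$, as a simultaneous witness for the leading monomial ideals over $\QQ$ and over $\ZZ_p$, exploiting the hypothesis that no leading coefficient of an element of $G_\ZZ$ is divisible by $p$. First, I would verify that $G_\ZZ$ continues to be a Gröbner basis of $I \subset \QQ[x_1,\dots,x_n]$ after tensoring with $\QQ$: since each leading coefficient is invertible in $\QQ$ we may normalize the elements of $G_\ZZ$ without changing their leading monomials, and any $S$-polynomial reduction that succeeds over $\ZZ$ persists over $\QQ$. Consequently
\[ LM_\QQ(I) = \langle LM(g) : g \in G_\ZZ \rangle. \]

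Next, let $\pi_p \colon \ZZ[x_1,\dots,x_n] \to \ZZ_p[x_1,\dots,x_n]$ be the coefficientwise reduction map and write $\bar{g} = \pi_p(g)$. Because $p$ does not divide the leading coefficient of any $g \in G_\ZZ$, the leading monomial is preserved, i.e.\ $LM(\bar{g}) = LM(g)$. One then has to check that $\bar{G}_\ZZ := \{\bar{g} : g \in G_\ZZ\}$ is itself a Gröbner basis of $\pi_p(I_\ZZ) = \langle \bar{h}_1,\dots,\bar{h}_k\rangle \subset \ZZ_p[x_1,\dots,x_n]$. For this I would apply Buchberger's criterion pairwise: for $g,g' \in G_\ZZ$, the $S$-polynomial $S(g,g')$ reduces to zero via $G_\ZZ$ over $\QQ$, and the rational coefficients appearing along such a reduction have denominators built from leading coefficients of the divisors in $G_\ZZ$. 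Pauer luckiness forces these denominators to be coprime to $p$, so $\pi_p$ transports the reduction chain verbatim to $\ZZ_p[x_1,\dots,x_n]$ and exhibits $S(\bar{g},\bar{g}')$ reducing to zero via $\bar{G}_\ZZ$. Hence $\bar{G}_\ZZ$ is a Gröbner basis, and combining with the previous step,
\[ LM_{\ZZ_p}(I) \;=\; \langle LM(\bar{g}) : g \in G_\ZZ \rangle \;=\; \langle LM(g) : g \in G_\ZZ \rangle \;=\; LM_\QQ(I). \]

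The main obstacle is the bookkeeping in the Buchberger step, namely the verification that the rational coefficients introduced during the normal-form reduction of $S(g,g')$ via $G_\ZZ$ have denominators whose prime divisors all appear among the leading coefficients of elements of $G_\ZZ$, so that Pauer luckiness applies. If Pauer works with strong Gröbner bases over $\ZZ$, in which $S$-polynomial reductions can be carried out directly inside $\ZZ[x_1,\dots,x_n]$, this obstacle essentially disappears and one needs only to track which leading coefficients the reductions invoke. In either framework, the luckiness hypothesis is invoked in exactly one place: to guarantee that the divisions performed during reduction remain well-defined after applying $\pi_p$.
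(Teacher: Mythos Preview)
The paper does not give its own proof of this proposition: it is quoted verbatim as \cite[Prop.~4.1]{Pauer} and used only as background for the luckiness discussion that follows. There is therefore no argument in the paper to compare yours against.

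For what it is worth, your sketch is sound and is essentially the argument one finds in Pauer's paper. Two small comments. For the passage $\ZZ\to\QQ$ you do not need to re-run Buchberger: any $f\in I$ has an integer multiple $Nf\in I_\ZZ$, so $LM(f)=LM(Nf)$ already lies in $\langle LM(g):g\in G_\ZZ\rangle$. For the passage $\ZZ\to\ZZ_p$, the cleanest route is exactly the one you flag parenthetically: take $G_\ZZ$ to be a strong Gr\"obner basis over $\ZZ$, so each $S$-pair has an \emph{integer} standard representation $S(g,g')=\sum q_j g_j$ with $LM(q_jg_j)<\mathrm{lcm}(LM(g),LM(g'))$; applying $\pi_p$ preserves the monomial bound termwise (even if some coefficients vanish) and, because $p\nmid LC(g),LC(g')$, one has $\pi_p(S(g,g'))$ equal to a unit multiple of $S_{\ZZ_p}(\bar g,\bar g')$. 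This dispenses with the denominator bookkeeping you identify as the main obstacle. Either way, the luckiness hypothesis enters precisely where you say it does.
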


The following is used to avoid dividing any coefficients.

\begin{definition}\label{def:RealitivelyLarge}
For a finite collection of polynomials $h\subset\ZZ[x_1,\dots,x_n]$, we say that a prime~$p$ is {\em large relative to the coefficients of $h$} if $p$ is larger than
any coefficient appearing in a polynomial contained in $h$.
\end{definition}

The following shows that every prime
that is large relative to the coefficients of $f$
is Pauer lucky for $I_i(\Theta,\Lambda,\mu)$ defined 
in \eqref{eq:I_lambda_theta} on a Zariski dense set.

\begin{theorem}
Following the notation from Theorem~\ref{thm:Computeg_i}
such that $f\subset\ZZ[x_1,\dots,x_n]$,
suppose that $U$ is a Zariski dense subset of 
$\QQ^{i\times n}\times \QQ^{(n-i) \times r}\times \QQ^r$ where the statements of Theorem \ref{thm:Computeg_i} hold.
Also, suppose that for $(\Theta,\Lambda,\mu)\in U$, 
we have that $\dim(I_i(\Theta,\Lambda,\mu))=0$. 
Then, every prime $p$ which is large relative to the coefficients of $f$ is a Pauer lucky prime for 
the ideal $I_i(\Theta,\Lambda,\mu)$
 on a Zariski dense set of parameters 
$(\Theta,\Lambda,\mu)\in \QQ^{i\times n}\times \QQ^{(n-i) \times r}\times \QQ^r$.\label{Theorem:GeneralCOefficentsAllPrimesLucky}
\end{theorem}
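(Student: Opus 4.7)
The plan is to exhibit, for each prime $p$ that is large relative to the coefficients of $f$, a Zariski dense subset of $\QQ^{N}$ (where $N = in + (n-i)r + r$) on which Pauer luckiness holds. The strategy combines the passage to a generic Gr\"obner basis over the function field $K=\QQ(\Theta,\Lambda,\mu)$ with a Schwartz--Zippel density argument.

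First I treat the entries of $\Theta,\Lambda,\mu$ as indeterminates and compute a reduced Gr\"obner basis of $I_i(\Theta,\Lambda,\mu) \subset K[x,T]$ relative to a fixed monomial order on $(x,T)$. Clearing denominators produces polynomials $\tilde{g}_1,\dots,\tilde{g}_s \in \ZZ[\Theta,\Lambda,\mu][x,T]$ with nonzero leading coefficients $c_1,\dots,c_s \in \ZZ[\Theta,\Lambda,\mu]$. By the principle underlying comprehensive Gr\"obner bases and the modular methods of \cite{GBTrace,ModGB}, at every specialization $(\Theta^*,\Lambda^*,\mu^*) \in \QQ^N$ with $c_1(\Theta^*,\Lambda^*,\mu^*)\cdots c_s(\Theta^*,\Lambda^*,\mu^*) \neq 0$, the specialized polynomials form a Gr\"obner basis of $I_i(\Theta^*,\Lambda^*,\mu^*)$ over $\QQ$ whose leading coefficients are precisely the integers $c_j(\Theta^*,\Lambda^*,\mu^*)$. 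Passing to a Gr\"obner basis over $\ZZ$ in the sense of Definition~\ref{def:PauerLucky} only divides out integer content, so Pauer luckiness for $p$ reduces to ensuring $p \nmid c_j(\Theta^*,\Lambda^*,\mu^*)$ for every $j$.

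Next I would argue that each $c_j$ has nonzero reduction modulo $p$. The integer coefficients appearing in the generators $\Theta x - \mathbf{1}$, $\Lambda \cdot f$, and $1 - (\mu \cdot f) T$ all lie in $\{\pm 1\} \cup \{\text{coefficients of } f\}$, and $p$ divides none of these by hypothesis. Running Buchberger's algorithm symbolically in parallel over $K$ and over $\ZZ_p(\Theta,\Lambda,\mu)$ yields identical $S$-pair reductions step-by-step, since every intermediate leading coefficient is a polynomial in $(\Theta,\Lambda,\mu)$ whose integer content is a product of initial coefficients and hence remains coprime to $p$. Consequently each $c_j \bmod p$ is a nonzero polynomial in $\ZZ_p[\Theta,\Lambda,\mu]$.

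Finally, having established both $c_1\cdots c_s \neq 0$ in $\QQ[\Theta,\Lambda,\mu]$ and $c_1\cdots c_s \not\equiv 0 \pmod{p}$, the proof concludes by Schwartz--Zippel. For any nonempty Zariski open subset $W = \{h\neq 0\} \subset \QQ^N$, the polynomial $h \cdot c_1 \cdots c_s$ is nonzero in $\QQ[\Theta,\Lambda,\mu]$ and its reduction modulo $p$ is likewise nonzero; evaluating at integer points of a sufficiently large cube produces a point at which $h$ and all $c_j$ are nonzero in $\QQ$ and each $c_j$ is coprime to $p$. Such a point is Pauer lucky for $p$ and lies in $W$, so the Pauer-lucky set meets every nonempty Zariski open subset of $\QQ^N$ and is therefore Zariski dense. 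The main obstacle is the content-preservation claim used in the third paragraph: a rigorous argument that Buchberger reduction over $\ZZ[\Theta,\Lambda,\mu]$ introduces no new prime factors into the integer content of any $c_j$, so that the mod-$p$ computation genuinely mirrors the one over $K$.
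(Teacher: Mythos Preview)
Your strategy is genuinely different from the paper's, and the gap you flag in your final sentence is real and is precisely where the two approaches diverge.

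The paper does \emph{not} attempt to show that each $c_j$ is nonzero modulo $p$ by tracking content through Buchberger's algorithm. Instead, it first replaces the rational parameters $(\Theta,\Lambda,\mu)$ by separate integer numerator and denominator parameters $\mathfrak{W}$, clears denominators in the generators, and then computes a Gr\"obner basis of the resulting ideal over the \emph{ring} $(\ZZ[\mathfrak{W}])[x,T]$. The structural payoff of this move is that after clearing denominators, every nonconstant monomial (in $x,T$) of every generator carries a coefficient that is a nontrivial product of entries of $\mathfrak{W}$; consequently every leading coefficient $c_j(\mathfrak{W})$ of the ring-level Gr\"obner basis must be a \emph{nonconstant} polynomial in the parameters. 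The paper then argues by contradiction: were $p$ Pauer unlucky on a Zariski dense set, some fixed $c_j$ would be divisible by $p$ on a dense set, and a density argument forces $c_j$ to be constant, contradicting the structural observation. Thus the paper never needs to control the integer content produced during Buchberger reductions; it only needs nonconstancy, which is read off directly from the shape of the generators.

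Your route, by contrast, computes over the field $K=\QQ(\Theta,\Lambda,\mu)$ and then tries to argue that the cleared-denominator leading coefficients $c_j$ have content coprime to $p$. Your parallel-Buchberger heuristic---that intermediate leading coefficients have integer content equal to products of initial coefficients---is not correct in general: $S$-polynomial formation followed by reduction can and does introduce new integer factors (already in tiny examples over $\ZZ$), so there is no a priori reason the content of $c_j$ avoids $p$. Without that, your mod-$p$ Schwartz--Zippel step has no input. A secondary imprecision: a Gr\"obner basis over $\ZZ$ is not obtained from one over $\QQ$ merely by dividing out content; however, the divisibility direction you actually need (evaluated $c_j$'s are multiples of the $\ZZ$-Gr\"obner leading coefficients) does hold and is exactly the step the paper invokes, so this point is salvageable. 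The essential missing idea in your argument is the paper's replacement of the content-tracking problem by the much softer observation that each $c_j$ is nonconstant in the parameters.
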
\begin{proof}
Since each entry of $\Theta, \Lambda$, and $\mu$ is rational, we symbolically
treat the numerator and denominator separately.  
To that end, let $\Theta^{\rm num},\Theta^{\rm denom}\in \ZZ^{i\times n}, \Lambda^{\rm num},\Lambda^{\rm denom}\in \ZZ^{(n-i) \times r}$ and $\mu^{\rm num},\mu^{\rm denom}\in \ZZ^r$ denote the corresponding numerators and denominators
of $\Theta, \Lambda$ and $\mu$, respectively. 
Let $\mathfrak{W}=(\Theta^{\rm num},\Theta^{\rm denom},\Lambda^{\rm num},\Lambda^{\rm denom},\mu^{\rm num},\mu^{\rm denom})$ denote the collection
of integer parameters.

For $I_i(\Theta,\Lambda,\mu)$ as in \eqref{eq:I_lambda_theta},
let $I_i(\mathfrak{W})$ be the ideal obtained by clearing denominators
in each of the generators of $I_i(\Theta,\Lambda,\mu)$.
Note that for any $(\Theta,\Lambda,\mu)\in U$, 
the corresponding $I_i(\Theta,\Lambda,\mu)$ and $I_i(\mathfrak{W})$
define the same ideal in $\QQ[x_1,\dots, x_n,T]$. Also note that since $p$ is large relative to the coefficients of $f$, 
each coefficient in $f$ is not divisible by $p$.

Given an integer vector $\mathfrak{W}$ where each entry
of $\Theta^{\rm denom}$, $\Lambda^{\rm denom}$, $\mu^{\rm denom}$
is nonzero, we let ${\rm rat}(\mathfrak{W})$ 
denote the corresponding rational values 
$(\Theta,\Lambda,\mu)\in \QQ^{i\times n}\times \QQ^{(n-i) \times r}\times \QQ^r$.

With this setup, we will consider $I_i(\mathfrak{W})$ as an ideal in the 
ring $\left(\ZZ[\mathfrak{W}]\right)[x_1,\dots,x_n,T]$.
Let ${G}\subset \left(\ZZ[\mathfrak{W}]\right)[x_1,\dots,x_n,T]$ 
be a Gr\"obner basis of $I_i(\mathfrak{W})$ consisting of, say, $\nu$ polynomials.
Let $\{c_1(\mathfrak{W}), \dots, c_\nu(\mathfrak{W}) \}$ be the leading coefficients of ${G}$ where each $c_j(\mathfrak{W})\in \ZZ[\mathfrak{W}]$.
Given $(\Theta,\Lambda,\mu)\in U$ and integer vector $\mathfrak{W}$
such that ${\rm rat}(\mathfrak{W}) = (\Theta,\Lambda,\mu)$,
the leading monomials of~$G$ evaluated at $\mathfrak{W}$
are a superset of the leading monomials obtained
by computing a Gr\"obner basis of $I_i(\mathfrak{W})$ in the ring $\ZZ[x_1,\dots,x_n,T]$.  In particular, the evaluated values of the leading coefficients
$\{c_1(\mathfrak{W}), \dots, c_\nu(\mathfrak{W}) \}$ are multiples of the 
leading coefficients of a Gr\"obner basis of 
$I_i(\mathfrak{W})$ in the ring $\ZZ[x_1,\dots,x_n,T]$. 
Therefore, for every $(\Theta,\Lambda,\mu)\in U$
giving rise to a corresponding $\mathfrak{W}$, 
either $p$ is Pauer lucky or Pauer unlucky. 
Hence, there must be a Zariski dense subset of $U$ such that at least one of these properties is satisfied. 
It is enough to show that there is a Zariski dense subset of $U$ such that for each $(\Theta,\Lambda,\mu)$ in this subset
with corresponding $\mathfrak{W}$, we have that $p$ does not divide 
$\{c_1(\mathfrak{W}), \dots, c_\nu(\mathfrak{W}) \}$ since this 
shows that $p$ does not divide the leading coefficients of a Gr\"obner basis of  $I_i(\mathfrak{W})$ in the ring $\ZZ[x_1,\dots,x_n,T]$.

We note that if $(\Theta,\Lambda,\mu)\in U$
with corresponding $\mathfrak{W}$, 
$\dim_\QQ(I_i(\Theta,\Lambda,\mu))=\dim_\QQ(I_i(\mathfrak{W}))=0$.
In particular, each leading monomial of $G$ is not constant
with respect to $x_1,\dots,x_n,T$.

For the sake of reaching a contradiction, suppose that there exists a Zariski dense subset $D\subset U$ such that for all $(\Theta,\Lambda,\mu)\in D$
with corresponding integer vector $\mathfrak{W}$, 
we have that~$p$ is a Pauer {\em unlucky} prime for the ideal
$I_i(\mathfrak{W}) \subset \ZZ[x_1,\dots,x_n,T]$. 
If $p$ is Pauer {\em unlucky} for $I_i(\mathfrak{W})$ for all ${\rm rat}(\mathfrak{W})\in D$, then there must exist a fixed $j\in\{1,\dots,\nu\}$ and a (possibly smaller) Zariski dense subset $\tilde{D}\subset D$ such that $p$ divides $c_j(\mathfrak{W})$ for all $\mathfrak{W}$ corresponding to $(\Lambda,\Theta,\mu)\in \tilde{D}$.

Since $\tilde{D}\subset \QQ^{i\times n}\times \QQ^{(n-i) \times r}\times \QQ^r$ is Zariski dense, it is also dense in the classical topology. 
This means that $c_j(\mathfrak{W})$ must be constant since a nonconstant polynomial must map dense sets to dense sets and the set of 
multiples of a prime is not dense in $\QQ$ (in the classical topology). 
It follows that $c_j(\mathfrak{W})=\tilde{c}\in \ZZ$ for all ${\rm rat}(\mathfrak{W})\in \QQ^{i\times n}\times \QQ^{(n-i) \times r}\times \QQ^r$.
This is not possible due to the construction of $I_i(\mathfrak{W})$ since
every non-constant term of every polynomial in $I_i(\mathfrak{W})$ is 
multiplied by some parameter value, i.e., at least one parameter must appear in 
each leading term since the ideal $I_i(\mathfrak{W})$ is not generated by a constant.
Hence, there is {\em no} dense subset of~$U$ such that $p$ is Pauer {\em unlucky}.  
It follows that there must exist some dense subset of $W\subset U\subset \QQ^{i\times n}\times \QQ^{(n-i) \times r}\times \QQ^r $ for which $p$ is a Pauer {\em lucky} prime for the ideal $I_i(\Theta,\Lambda,\mu)$ for all $(\Theta,\Lambda,\mu)\in W$. 
\end{proof}
We now illustrate the construction in the proof of Theorem \ref{Theorem:GeneralCOefficentsAllPrimesLucky} with an example. 
\begin{example}
Reconsider $f(x) = [x_1,x_2,x_1x_2^2,x_1^3x_2^2]^T$ from 
Example~\ref{example:firstEx}
where $g_1(X,\CC^2)=5$ with $\mbox{$X=\Var(f) = \{(0,0)\}$}$. 
Note that every prime $p\geq 2$ is 
large relative to the coefficients of~$f$ 
as in Definition \ref{def:RealitivelyLarge}.
Since the ideal
\begin{equation}\label{eq:I_ex}
\left\langle \theta_1 x_1+ \theta_2 x_2-1,  \lambda_1 x_1+\lambda_2 x_2+\lambda_3 x_1x_2^2+\lambda_4 x_1^3x_2^2\right\rangle
\end{equation}
generically already has degree $5=g_1(X,\CC^2)$, 
we can simplify the presentation
by just considering $I_1(\Theta,\Lambda)$ equal
to the ideal in \eqref{eq:I_ex}.
Define 
$$\mathfrak{W}=[
\theta_1^{\rm num},\theta_1^{\rm denom},
\theta_2^{\rm num},\theta_2^{\rm denom},
\lambda_1^{\rm num},\lambda_1^{\rm denom},
\lambda_2^{\rm num},\lambda_2^{\rm denom},
\lambda_3^{\rm num},\lambda_3^{\rm denom},
\lambda_4^{\rm num},\lambda_4^{\rm denom}]$$
so that $I_i(\Theta,\Lambda)$ is equivalent to
$$
\left\langle \frac{\theta_1^{\rm num}}{\theta_1^{\rm denom}}x_1+\frac{\theta_2^{\rm num}}{\theta_2^{\rm denom}}x_2-1,  \frac{\lambda_1^{\rm num}}{\lambda^{\rm denom}} x_1+\frac{\lambda_2^{\rm num}}{\lambda^{\rm denom}_2} x_2+\frac{\lambda_3^{\rm num}}{\lambda^{\rm denom}_3}x_1x_2^2+\frac{\lambda_4^{\rm num}}{\lambda^{\rm denom}_4}x_1^3x_2^2\right\rangle.
$$
Symoblically clearing denominators, we obtain the ideal $I_i(\mathfrak{W}) \subset (\ZZ[\mathfrak{W}])[x_1,x_2]$, namely
$$
\left\langle 
\begin{array}{l}
{\theta_1^{\rm num}}{\theta_2^{\rm denom}}x_1+{\theta_2^{\rm num}}{\theta_1^{\rm denom}}x_2-{\theta_1^{\rm denom}}{\theta_2^{\rm denom}}, \\
{\lambda_1^{\rm num}}{\lambda^{\rm denom}_2}{\lambda^{\rm denom}_3}{\lambda^{\rm denom}_4} x_1+{\lambda_2^{\rm num}}{\lambda^{\rm denom}_1}{\lambda^{\rm denom}_3}{\lambda^{\rm denom}_4} x_2\\
\,\,\,\,\,\,\,\,\,\,\,\,\,\,\,\,\,\,\,\,\,\,\,\,\,\,\,\,\,\,+~{\lambda_3^{\rm num}}{\lambda^{\rm denom}_1}{\lambda^{\rm denom}_2}{\lambda^{\rm denom}_4}x_1x_2^2 
+{\lambda_4^{\rm num}}{\lambda^{\rm denom}_1}{\lambda^{\rm denom}_2}{\lambda^{\rm denom}_3}x_1^3x_2^2
\end{array}\right\rangle.
$$
Using lexicographic order, the leading terms of 
a Gr\"obner basis for $I_i(\mathfrak{W})$ are
\begin{equation}\label{eq:LeadTermsEx}
\left\{c_1(\mathfrak{W})x_2^5,~c_2(\mathfrak{W}) x_1,~
c_3(\mathfrak{W}) x_1 x_2^4,~
c_4(\mathfrak{W}) x_1^2 x_2^3,~
c_5(\mathfrak{W}) x_1^3 x_2^2
\right\}
\end{equation}
where
$$
\begin{array}{rcl}
c_1(\mathfrak{W}) &=& 
\left({\theta^{\rm denom}_{1}}
{\theta^{{\rm num}}_{2}}\right)^{3}
{\lambda^{\rm denom}_{1}}{\lambda^{\rm denom}_{2}}{\lambda^{\rm denom}_{3}}{\lambda}^{\rm num}_{4},\\
c_2(\mathfrak{W}) &=& {\theta^{{\rm denom}}_{2}}{\theta^{\rm num}_{1}},\\
c_3(\mathfrak{W}) &=& 
\left({\theta^{{\rm denom}}_{1}}{\theta^{\rm num}_{2}}\right)^{2}{\lambda^{\rm denom}_{1}}{\lambda^{{\rm denom}}_{2}}{\lambda^{{\rm denom}}_{3}}{\lambda}_{4}^{\rm num}, \\
c_4(\mathfrak{W}) &=& {\theta^{{\rm denom}}_{1}}{\theta}^{\rm num}_{2}{\lambda^{{\rm denom}}_{1}}{\lambda^{{\rm denom}}_{2}}{\lambda^{{\rm denom}}_{3}}{\lambda}_{4}^{\rm num},\\
c_5(\mathfrak{W}) &=& {\lambda^{{\rm denom}}_{1}}{\lambda^{\rm denom}_{2}}{\lambda^{{\rm denom}}_{3}}{\lambda}_{4}^{\rm num}.
\end{array}
$$
Generically, \eqref{eq:LeadTermsEx} yields that the
$5=g_i(X,\CC^2)$ standard monomials are $1,x_2,\dots,x_2^4$.
As expected, each leading coefficient $c_i(\mathfrak{W})$ 
is nonconstant.

For illustration, consider $\Theta=[3/2,2/3]$ and $\Lambda=[7/5,9/11,-5/13,13/17]$
corresponding with $\mathfrak{W} = [3,2,2,3,7,5,9,11,-5,13,13,17]$.
Evaluating \eqref{eq:LeadTermsEx} yields the following leading terms:
\begin{equation}
\left\{9295000 {x}_{2}^{5},~33 {x}_{1},~929500 {x}_{1}{x}_{2}^{4},~92950 {x}_{1}^{2}{x}_{2}^{3},~9295 {x}_{1}^{3}{x}_{2}^{2} \right\}.
\label{eq:LCv1}
\end{equation}
On the other hand, if we substitute $\Theta=[3/2,2/3]$ and $\Lambda=[7/5,9/11,-5/13,13/17]$ into~\eqref{eq:I_ex},
clear denominators, and compute a Gr\"obner basis for the resulting ideal in $\ZZ[x_1,x_2]$, the leading terms are \begin{equation}\label{eq:LCv2}
\left\{845000 {x}_{2}^{5},~33 {x}_{1},~{x}_{1}{x}_{2}^{4},~{x}_{1}^{2}{x}_{2}^{3},~11 {x}_{1}^{3}{x}_{2}^{2}\right\}.
\end{equation}
As expected, each leading coefficient in \eqref{eq:LCv2} 
divides the corresponding coefficient in \eqref{eq:LCv1}, e.g., $9295000=845000\cdot 11$ and $9295=11\cdot 845$.  
For this problem, both \eqref{eq:LCv1} and \eqref{eq:LCv2}
yield that the same set of Pauer unlucky primes
via Definition~\ref{def:PauerLucky}, namely $\{2, 3, 5, 11, 13\}$.
\end{example}

\subsection{Probability of a prime being unlucky for randomly constructed ideals} 

Instead of considering a Zariski dense subset
of $\QQ^{i\times n}\times \QQ^{(n-i) \times r}\times \QQ^r$
as in Theorem~\ref{Theorem:GeneralCOefficentsAllPrimesLucky},
we now consider random choices from a finite subset.

\begin{theorem}
Let $f=[f_1,\dots, f_r]^T\subset \ZZ[x_1,\dots,x_n]$,
$i\in \{0,\dots,n-1\}$, and $p$ be a prime
that is large relative to the coefficients of $f$.
Suppose that $S=\{0,\dots, p-1\}$
and each entry of $(\Theta,\Lambda,\mu)\in \QQ^{i\times n}\times \QQ^{(n-i)\times r}\times \QQ^r$ is randomly selected from $S$.
Then, a lower bound on 
the probability that $p$ is a Pauer lucky prime for 
$I_i(\Theta,\Lambda,\mu)$ as in \eqref{eq:I_lambda_theta} is 
\begin{equation}\label{eq:Probability}
P(p{\rm \; is \; Pauer \; lucky \; for \;}I_i(\Theta,\Lambda,\mu))\geq \left(\frac{p-1}{p}\right)^{\nu},
\end{equation}
where $\nu$ is the number of polynomials in a Gr\"obner basis of $I_i(\Theta,\Lambda,\mu)$ in $\ZZ[x_1,\dots,x_n,T]$ and $$
\nu \leq {\deg(I_i(\Theta, \Lambda, \mu)) +(n+1) \choose (n+1)}.
$$
\label{thm:ProbPauerLucky}
\end{theorem}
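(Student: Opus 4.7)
Continuing from the parametric Gr\"obner basis construction in the proof of Theorem~\ref{Theorem:GeneralCOefficentsAllPrimesLucky}, write $\mathfrak{W}$ for the collection of integer parameters $(\Theta,\Lambda,\mu)$ and let $c_1(\mathfrak{W}),\dots,c_\nu(\mathfrak{W})\in\ZZ[\mathfrak{W}]$ be the leading coefficients of a parametric Gr\"obner basis of $I_i(\mathfrak{W})$. As shown there, for any specialization $\mathfrak{W}^*$ the leading coefficients of a Gr\"obner basis of the specialized integral ideal $I_i(\mathfrak{W}^*)\subset\ZZ[x_1,\dots,x_n,T]$ divide the corresponding values $c_j(\mathfrak{W}^*)$. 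Hence $p$ is Pauer lucky whenever $p\nmid c_j(\mathfrak{W}^*)$ for all $j$, so
\begin{equation*}
P\bigl(p \text{ is Pauer lucky for } I_i(\Theta,\Lambda,\mu)\bigr) \;\ge\; P\bigl(p \nmid c_j(\mathfrak{W})\text{ for all } j = 1,\dots,\nu\bigr).
\end{equation*}

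The heart of the argument is to show $P(p \mid c_j(\mathfrak{W})) \le 1/p$ for each $j$ when each coordinate of $\mathfrak{W}$ is drawn independently and uniformly from $S=\{0,\dots,p-1\}$. By Theorem~\ref{Theorem:GeneralCOefficentsAllPrimesLucky}, each reduction $\bar c_j \in \ZZ_p[\mathfrak{W}]$ is a nonzero polynomial. The parametric generators of $I_i(\mathfrak{W})$ are each linear in every parameter of $\mathfrak{W}$, and tracing this linearity through the S-polynomial reductions producing the parametric basis identifies, for each $c_j$, a distinguished parameter in which $\bar c_j$ is a nonzero linear form. Conditioning on all other parameters being revealed first, $\bar c_j$ becomes a nonzero degree-one polynomial over $\ZZ_p$ and hence vanishes on exactly one of the $p$ elements of $S$. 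Iterating this conditional argument over $j=1,\dots,\nu$ yields
\begin{equation*}
P\bigl(p \nmid c_j(\mathfrak{W})\text{ for all } j\bigr) \;\ge\; \Bigl(\frac{p-1}{p}\Bigr)^\nu.
\end{equation*}

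For the bound on $\nu$, the ideal $I_i(\Theta,\Lambda,\mu)$ is zero-dimensional in $\ZZ[x_1,\dots,x_n,T]$ by Theorem~\ref{thm:Computeg_i}, so the complement of its initial ideal consists of exactly $\deg(I_i(\Theta,\Lambda,\mu))$ standard monomials. The number $\nu$ coincides with the number of minimal generators of this initial ideal; since each such generator has total degree at most $\deg(I_i(\Theta,\Lambda,\mu))$, I would crudely estimate $\nu$ by the total number of monomials of that degree in the $n+1$ variables $x_1,\dots,x_n,T$, which gives
\begin{equation*}
\nu \;\le\; \binom{\deg(I_i(\Theta,\Lambda,\mu))+n+1}{n+1}.
\end{equation*}

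The principal obstacle is the per-coefficient bound of $1/p$: a direct application of Schwartz--Zippel yields only $\deg(c_j)/p$, so the argument must genuinely exploit the specific structure of the S-polynomial reductions, in particular that the original generators of $I_i(\mathfrak{W})$ are linear in each parameter, to locate a pivot parameter in which each leading coefficient $c_j$ remains linear. Once this structural lemma is established, the product bound $((p-1)/p)^\nu$ follows by an iterated conditioning over the independent uniform draws.
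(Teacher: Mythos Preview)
Your overall strategy coincides with the paper's: lift $I_i$ to a parametric Gr\"obner basis over $(\ZZ[\mathfrak{W}])[x_1,\dots,x_n,T]$, invoke Theorem~\ref{Theorem:GeneralCOefficentsAllPrimesLucky} to see that the leading coefficients $c_1,\dots,c_\nu\in\ZZ[\mathfrak{W}]$ are nonconstant, bound the probability that each $c_j$ vanishes modulo $p$ by $1/p$, and combine over $j$. The bound on $\nu$ is also argued identically, by counting all monomials of degree at most $\deg I_i(\Theta,\Lambda,\mu)$ in the $n+1$ variables $x_1,\dots,x_n,T$.

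The one place you diverge from the paper is precisely the step you flag as the principal obstacle. The paper does \emph{not} trace linearity through S-polynomial reductions; it dispatches the per-coefficient bound in a single sentence: ``Since a nonconstant polynomial must map $\ZZ_p$ to itself injectively, the probability that a given coefficient will be nonzero in $\ZZ_p$ \dots\ is at least $(p-1)/p$.'' Your worry that a bare Schwartz--Zippel argument yields only $\deg(c_j)/p$ is legitimate, and your proposed structural lemma (a distinguished parameter in which each $c_j$ remains linear) is a more careful accounting of what is needed than the paper itself supplies. In short, the paper simply asserts the $1/p$ bound from nonconstancy and moves on; your plan to justify it via linearity goes beyond what the published proof contains.

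One minor point: once the unconditional per-coefficient bound $P(p\mid c_j(\mathfrak{W}))\le 1/p$ is in hand, you do not need the iterated conditioning you describe (which would require the pivot parameters to be distinct across $j$, something you have not argued). A union bound gives $P(\text{all }c_j\not\equiv 0)\ge 1-\nu/p$, and Bernoulli's inequality $1-\nu/p\ge (1-1/p)^\nu$ already delivers the stated product.
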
\begin{proof}
Consider $I_i(\Theta,\Lambda,\mu)$ as an ideal in the ring $(\ZZ[\Theta,\Lambda,\mu])[x_1,\dots,x_n,T]$ and let ${G}$ be a Gr\"obner basis of $I_i(\Theta,\Lambda,\mu)$ in this ring which consists of $\nu$ polynomials. 

First, suppose the ideal $I_i(\Theta,\Lambda,\mu)$ in $(\ZZ[\Theta,\Lambda,\mu])[x_1,\dots,x_n,T]$ is not generated by a constant. 
From the proof of Theorem~\ref{Theorem:GeneralCOefficentsAllPrimesLucky}, 
we have that the leading coefficients of ${G}$, say
$\{c_1(\Theta,\Lambda,\mu), \dots, c_\nu(\Theta,\Lambda,\mu) \}$, are nonconstant polynomials in $\ZZ[\Theta,\Lambda,\mu]$. Since a nonconstant polynomial must map $\ZZ_p$ to itself injectively, the probability that a given coefficient will be 
nonzero in $\ZZ_p$ where each entry
of $(\Theta,\Lambda,\mu)$ is chosen from $S$ is at least $(p-1)/p$. 

Since no leading monomial can have degree greater than $\deg(I_i(\Theta, \Lambda, \mu))$ and there are $$D={\deg(I_i(\Theta, \Lambda, \mu)) +(n+1) \choose (n+1)}$$ monomials having degree at most $\deg(I_i(\Theta, \Lambda, \mu))$ in $(n+1)$ variables, it follows that $\nu\leq D$.

Finally, if $I_i(\Theta,\Lambda,\mu)$ is generated by a constant, i.e., an element of $\ZZ[\Theta,\Lambda,\mu]$,
the same lower bound on the probability of success holds.
\end{proof}

\begin{remark}
Since the degree of $I_i(\Theta,\Lambda,\mu)$ is the quantity of interest,
one could produce lower bounds on the probability in \eqref{eq:Probability}
by using any upper bound on the degree of~$I_i(\Theta,\Lambda,\mu))$,~e.g., the multihomogeneous B\'ezout bound or the mixed volume.  Further, note that \cite[Cor.~2.1]{FGLM} yields the number of polynomials in a reduced Gr\"obner basis of a zero dimensional ideal over a field is at most 
the product of the number of variables and
the degree of the ideal. While the ideal in Theorem \ref{thm:ProbPauerLucky} is zero dimensional, 
it is not computed over a field.
However, it seems likely that there is a much lower upper bound on $\nu$ than the bound in Theorem \ref{thm:ProbPauerLucky}.
\end{remark}

In Theorem~\ref{thm:ProbPauerLucky}, each entry of $(\Theta,\Lambda,\mu)$ 
is taken from a finite set $\mathcal{S}=\{0,1,\dots,p-1\}$ 
while the ideal $I_i(\Theta,\Lambda,\mu)$ as in \eqref{eq:I_lambda_theta}
is defined over $\ZZ$.
We now turn to treating $I_i(\Theta,\Lambda,\mu)$ as an ideal
defined over a finite field.
To that end, let $g_i(X,\CC^n)$ denote the (correct) integer value 
as in \eqref{eq:g_iV2} computed using general rational values 
for $\Theta$, $\Lambda$, and $\mu$ as in Theorem~\ref{thm:Computeg_i}. 
Let $g_i(X,\CC^n)_{\mathcal{S}\subset \QQ}^{\rm rand}$ denote 
$\dim_{\QQ} \left(R[T]/I_i(\Theta,\Lambda,\mu)\right)$ 
computed where the entries of $\Theta$, $\Lambda$, and $\mu$ 
are randomly selected from a finite set $\mathcal{S}\subset \QQ$. 
Finally, let $g_i(X,\CC^n)_{\ZZ_p}^{\rm rand}$ denote 
$\dim_{\ZZ_p} \left(R[T]/I_i(\Theta,\Lambda,\mu)\right)$ 
where each entry of $\Theta$, $\Lambda$, and $\mu$ is
selected randomly among the elements of $\ZZ_p$.
The following provides a comparison of probabilities
for computing the same number.  In this statement, 
$P(A=B)$ denotes the probability that random variables 
$A$ and $B$ take on the same value.

\begin{theorem}
Following the setup of Theorem~\ref{thm:ProbPauerLucky},
$$
P(g_i(X,\CC^n)=g_i(X,\CC^n)_{\ZZ_p}^{\rm rand})\geq \left( \frac{p-1}{p}\right)^\nu P(g_i(X,\CC^n)=g_i(X,\CC^n)_{\mathcal{S}\subset \QQ}^{\rm rand})
$$ where $\nu$ is as in Theorem \ref{thm:ProbPauerLucky}. \label{thm:ProbZZpVsQQ}
\end{theorem}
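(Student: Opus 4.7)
The plan is to couple the two probabilistic experiments by identifying $\mathcal{S} = \{0, 1, \ldots, p-1\}$ with $\ZZ_p$ via reduction mod $p$. Under this coupling, a single uniformly random draw $\mathfrak{W} = (\Theta, \Lambda, \mu)$ simultaneously furnishes $g_i(X,\CC^n)^{\rm rand}_{\mathcal{S} \subset \QQ}$ (interpreting entries as rationals) and $g_i(X,\CC^n)^{\rm rand}_{\ZZ_p}$ (interpreting entries mod~$p$), so all relevant random variables live on a common sample space.

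First I would introduce two events: $A = \{g_i(X,\CC^n) = g_i(X,\CC^n)^{\rm rand}_{\mathcal{S}\subset\QQ}\}$ and $B = \{p \text{ is Pauer lucky for } I_i(\Theta,\Lambda,\mu)\}$. Appealing to the preceding Proposition~4.1 of \cite{Pauer}, on the event $B$ the leading monomial ideals over $\QQ$ and $\ZZ_p$ coincide, so the standard monomials agree and hence the quotient ring dimensions over the two fields are equal. Combined with the event $A$, this forces $g_i(X,\CC^n)^{\rm rand}_{\ZZ_p} = g_i(X,\CC^n)$, yielding the key containment $A \cap B \subseteq \{g_i(X,\CC^n) = g_i(X,\CC^n)^{\rm rand}_{\ZZ_p}\}$, so
\begin{equation*}
P\bigl(g_i(X,\CC^n) = g_i(X,\CC^n)^{\rm rand}_{\ZZ_p}\bigr) \geq P(A \cap B).
\end{equation*}

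Next I would lower-bound $P(A \cap B)$ by writing $P(A \cap B) = P(B \mid A)\, P(A)$ and showing $P(B \mid A) \geq ((p-1)/p)^{\nu}$. Borrowing the generic Gr\"obner basis setup from the proof of Theorem~\ref{thm:ProbPauerLucky}, let $c_1(\mathfrak{W}), \ldots, c_\nu(\mathfrak{W}) \in \ZZ[\mathfrak{W}]$ be the nonconstant leading coefficients of the generic Gr\"obner basis. On the event $A$ the specialized ideal realizes the generic $\QQ$-degree, forcing each $c_j(\mathfrak{W})$ to be nonzero in $\ZZ$ (otherwise the Gr\"obner basis structure would degenerate and the degree would change). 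Given this, $B$ is equivalent to every $c_j(\mathfrak{W})$ being nonzero modulo $p$, and the per-factor estimate from the proof of Theorem~\ref{thm:ProbPauerLucky} (each nonconstant polynomial in $\ZZ[\mathfrak{W}]$ evaluates to a nonmultiple of $p$ with probability at least $(p-1)/p$ on uniform $\mathcal{S}$-samples) delivers the bound.

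The main obstacle is the conditioning: one must justify that restricting to $A$ does not inflate the probability that some $c_j(\mathfrak{W})$ is divisible by $p$ beyond what Theorem~\ref{thm:ProbPauerLucky} allows. The cleanest route, if a direct conditional analysis is awkward, is to observe that the unconditional event $\{c_j(\mathfrak{W}) \not\equiv 0 \bmod p \text{ for all } j\}$ is already contained in $A \cap B$, so Theorem~\ref{thm:ProbPauerLucky} gives $P(A \cap B) \geq ((p-1)/p)^{\nu}$; combined with the trivial bound $P(A) \leq 1$, this yields $P(A \cap B) \geq ((p-1)/p)^{\nu} P(A)$, completing the proof.
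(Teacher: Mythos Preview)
Your approach matches the paper's: the paper's entire proof is the single sentence ``This follows immediately from Theorem~\ref{thm:ProbPauerLucky},'' and you have correctly unpacked what that means (coupling the two experiments via $\mathcal{S}=\{0,\dots,p-1\}\cong\ZZ_p$, invoking Pauer luckiness, and applying Proposition~4.1 of \cite{Pauer} to transfer the leading-monomial ideal and hence the quotient dimension from $\QQ$ to $\ZZ_p$).

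One remark on your ``cleanest route.'' The containment $C:=\{c_j(\mathfrak{W})\not\equiv 0\bmod p\text{ for all }j\}\subseteq A$ is not as immediate as you suggest: you need that when the leading coefficients of the Gr\"obner basis over $\ZZ[\mathfrak{W}]$ are nonzero at a specialization $\mathfrak{W}_0$, the specialized basis remains a Gr\"obner basis of the specialized ideal over $\QQ$ (so that the leading-monomial ideal, and hence the vector-space dimension of the quotient, is preserved). This holds because the $S$-polynomial reductions performed over $\ZZ[\mathfrak{W}]$ are polynomial identities that specialize intact, but it deserves a sentence. Observe also that if this route goes through, you have in fact established the \emph{stronger} bound $P\bigl(g_i(X,\CC^n)=g_i(X,\CC^n)_{\ZZ_p}^{\rm rand}\bigr)\geq ((p-1)/p)^\nu$, with no factor $P(g_i(X,\CC^n)=g_i(X,\CC^n)_{\mathcal{S}\subset\QQ}^{\rm rand})$ on the right; the inequality as stated would then be slack. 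Your intermediate route via $P(B\mid A)$ is indeed awkward for the reason you flag (the event $A$ does not obviously force each $c_j(\mathfrak{W})$ to be nonzero, since the generic degree could in principle be realized with a different leading-monomial configuration), so the containment argument is the right way to finish.
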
\begin{proof}
This follows immediately from Theorem \ref{thm:ProbPauerLucky}. 
\end{proof}

\subsection{Probability of choosing general coefficients}\label{SubSection:ProbConstantsGeneral}

Although Theorem~\ref{thm:Computeg_i} states 
that general values of the parameters yield the correct result,
it does not provide insight into the set of parameters
where \eqref{eq:g_iV2} fails.
The following provides degree bounds which are used
to develop lower bounds on the probability of choosing
general coefficients.
We note that the results in this section are essentially an 
affine version of those in \cite[\S3.3,~A.2]{H17}. 

As above, suppose that $f=[f_1,\dots, f_r]^T\subset \ZZ[x_1,\dots,x_n]$, $i\in \{0,\dots,n-1\}$, and $X = \Var(f)$.
Define the projection of $\CC^n$ along $X$ as the rational map 
$pr_X:\CC^n\to \CC^r$ given by $pr_X(z)=f(z)$. 
Let
$g_i(X,\CC^n)$ be the value in \eqref{eq:g_iV2}
for general $(\Theta, \Lambda, \mu)$ and 
let $g_i(X,\CC^n)(\Theta, \Lambda, \mu)$ be the value obtained for a particular choice of $(\Theta, \Lambda,\mu)$. 

Consider the algebraic sets \begin{equation}
\Gamma_0=\left\lbrace (x,y)  \;|\; f_1(x)\neq 0 , \; y=pr_X(x)\right\rbrace \subset \CC_x^n\times \CC_y^r \,\,\,\,\hbox{~~and~~}\,\,\,\, \Gamma=\overline{\Gamma_0}\subset \CC_x^n\times \CC_y^r. 
\end{equation} 
We call $\Gamma$ the graph of $pr_X$. 
Define $\Gamma_{\rm b}=\Gamma \cap (\Var(f_1) \times \CC^r)$. 
In this notation, $g_i(X,\CC^n)$ is simply 
the number of solutions (counted with multiplicity) to the system 
$$
(x,y)\in \Gamma, \;\; \Theta \cdot x -{\bf 1}=0,\;\; \Lambda\cdot y=0   
$$ 
where $\Theta \in \QQ^{i\times n}$ and $\Lambda \in \QQ^{(n-i)\times r}$ are general matrices. 

We now treat $\Theta$ and $\Lambda$ as variables and work in 
$\CC_x^n\times \CC_y^r\times \CC_\Theta^{i\times n}\times \CC_\Lambda^{(n-i)\times r} $.  We construct the corresponding discriminant variety as follows.
Let \begin{equation}
\Phi_{\rm b}=\left\lbrace (x,y,\Theta, \Lambda)\; | \; (x,y) \in \Gamma_{\rm b} , \; \Theta \cdot x - {\bf 1} = 0, \; \Lambda\cdot y=0\right\rbrace
\end{equation} 
and $\pi_2:\CC_x^n\times \CC_y^r\times \CC_\Lambda^{(n-i)\times r} \times \CC_\Theta^{i\times n}\to \CC_\Lambda^{(n-i)\times r}\times \CC_\Theta^{i\times n}$ be the projection onto the $\Theta$ and $\Lambda$ coordinates. 
Also, define 
\begin{equation}
\Phi_\Gamma=\left\lbrace (x,y,\Theta, \Lambda)\; | \; (x,y) \in \Gamma , \; \Theta \cdot x - {\bf 1}=0, \; \Lambda \cdot y=0\right\rbrace,
\end{equation}
and set $$
\mathfrak{D}=\det \begin{bmatrix} 
     {\rm Jac}_{x,y}(y_1-f_1(x),\dots,y_r-f_r(x)) \\
      \begin{array}{cc}
       \Lambda~~  & ~~0
       \end{array}  \\
     \begin{array}{lr}
       0 ~~& ~~\Theta
       \end{array} 
  \end{bmatrix}\in \CC[x,y,\Theta, \Lambda].
$$
The discriminant (as shown below in Proposition \ref{Prop:DescriminateVar}) is given by \begin{equation}\label{eq:Discriminant}
\Delta_{pr_X}=\begin{cases} \pi_2(\Phi_{\rm b})\cup \pi_2(\Var(\mathfrak{D})\cap \Phi_\Gamma) & {\rm if\;} \pi_2|_{\Phi_\Gamma}{\rm \; is\; surjective,}\\\pi_2(\Phi_\Gamma)& {\rm if\;} \pi_2|_{\Phi_\Gamma}{\rm \; is\;not\; surjective,}
\end{cases}
\end{equation} 
that is, we show that for $(\Theta, \Lambda)\not \in \Delta_{pr_X}$, 
we have that $g_i(X,\CC^n)=g_i(X,\CC^n)(\Theta, \Lambda)$. 
The following helps to show this.

\begin{lemma}
Let $(\Theta, \Lambda)\in (\CC_\Lambda^{(n-i)\times r} \times \CC_\Theta^{i\times n})\backslash \pi_2(\Phi_{\rm b})$. 
Suppose that 
$$B = \pi_2^{-1}(\Theta, \Lambda)\cap \Phi_\Gamma\subset \CC_x^n\times \CC_y^r\times \CC_\Lambda^{(n-i)\times r} \times \CC_\Theta^{i\times n}$$ and let 
$\widetilde{B}=\Var(\Theta \cdot x -{\bf 1},\Lambda \cdot f)\backslash X\subset \CC_x^n$. Then, all points in $B$ are such that $f_1(x)\neq 0$ and the map $(x,y)\mapsto x$ gives a bijection $B\to \widetilde{B}$ with inverse map $x\mapsto (x,pr_X(x))$. \label{lemma:fiber}
\end{lemma}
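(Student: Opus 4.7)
The plan is to verify the two claims separately: first, that no point in $B$ can have $f_1(x)=0$; second, that the forward and inverse formulas give mutually inverse maps between $B$ and $\widetilde{B}$.

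For the first claim, I would argue by contradiction. Take $(x,y,\Theta,\Lambda)\in B$ and suppose $f_1(x)=0$. Since $(x,y,\Theta,\Lambda)\in\Phi_\Gamma$, one has $(x,y)\in\Gamma$ and the linear conditions $\Theta\cdot x-\mathbf{1}=0$ and $\Lambda\cdot y=0$. Combining $(x,y)\in\Gamma$ with $f_1(x)=0$ gives $(x,y)\in\Gamma\cap(\Var(f_1)\times\CC^r)=\Gamma_{\rm b}$, so by the definition of $\Phi_{\rm b}$ we obtain $(x,y,\Theta,\Lambda)\in\Phi_{\rm b}$. Applying $\pi_2$ then forces $(\Theta,\Lambda)\in\pi_2(\Phi_{\rm b})$, contradicting the standing assumption on $(\Theta,\Lambda)$.

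For the bijection, the key structural observation is that $\Gamma_0=\{(x,f(x)):f_1(x)\neq 0\}$ is the graph of the regular morphism $f$ restricted to the open set $\{f_1\neq 0\}\subset\CC^n$, and hence is closed inside $\{f_1\neq 0\}\times\CC^r$. Consequently, $\Gamma\cap(\{f_1\neq 0\}\times\CC^r)=\Gamma_0$, so any point in $\Gamma$ with $f_1(x)\neq 0$ automatically satisfies $y=f(x)=pr_X(x)$. Combining this with the first claim, every $(x,y,\Theta,\Lambda)\in B$ has $y=pr_X(x)$, so the projection $(x,y)\mapsto x$ is well defined and injective on $B$; moreover the image $x$ satisfies $\Theta\cdot x-\mathbf{1}=0$, $\Lambda\cdot f(x)=0$, and $f(x)\neq 0$ (since $f_1(x)\neq 0$), placing $x\in\widetilde{B}$. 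Conversely, given $x\in\widetilde{B}$, the point $(x,f(x))$ lies in $\Gamma$ and satisfies the linear conditions, so $(x,f(x),\Theta,\Lambda)\in\Phi_\Gamma\cap\pi_2^{-1}(\Theta,\Lambda)=B$. The formulas $(x,y)\mapsto x$ and $x\mapsto(x,pr_X(x))$ are mutual inverses because $y=f(x)$ throughout $B$.

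The one place requiring a moment of care is the identity $\Gamma\cap(\{f_1\neq 0\}\times\CC^r)=\Gamma_0$, which underlies the whole argument; everything else is a direct unwinding of the definitions of $\Gamma_{\rm b}$, $\Phi_{\rm b}$, and $\Phi_\Gamma$, and the logical contrapositive of $(\Theta,\Lambda)\notin\pi_2(\Phi_{\rm b})$. I expect this to be the only genuine content of the proof; the bijectivity of the projection onto a graph and the verification of the inverse formula are formal.
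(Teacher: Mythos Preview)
Your proof is correct and follows essentially the same route as the paper's: both hinge on the decomposition $\Gamma=\Gamma_0\sqcup\Gamma_{\rm b}$ (equivalently, your identity $\Gamma\cap(\{f_1\neq 0\}\times\CC^r)=\Gamma_0$) together with the contrapositive of $(\Theta,\Lambda)\notin\pi_2(\Phi_{\rm b})$. The only spot where your write-up is slightly thinner than the paper's is the converse direction: you assert that $(x,f(x))\in\Gamma$ for $x\in\widetilde{B}$, but $x\notin X$ does not \emph{a priori} force $f_1(x)\neq 0$, so one needs the observation (which the paper states explicitly, and which follows from your own remark that graphs of regular morphisms are closed) that $\Gamma$ coincides with the full graph $\{(x,f(x)):x\in\CC^n\}$, or equivalently that the closure over $\CC^n\setminus\Var(f_1)$ and over $\CC^n\setminus X$ agree.
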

\begin{proof}
Note that $\Gamma$ may be defined as the Zariski closure of the image of the restriction of $pr_X$ to either $\CC^n\backslash \Var(f_1)$ or $\CC^n\backslash X$ since these are the same. 
Let $b\in \widetilde{B}$. 
Since $b\not\in X$, we can set $y=pr_X(b)$ and hence $(b,y)\in \Gamma$. 
It follows that $(b,y,\Theta, \Lambda)\in \Phi_\Gamma$ and is thus contained~in~$B$. 

By assumption, $(\Theta, \Lambda)\in (\CC_\Lambda^{(n-i)\times r} \times \CC_\Theta^{i\times n})\backslash \pi_2(\Phi_{\rm b})$ so take $(x,y,\Theta, \Lambda)\in B\subset \Phi_\Gamma$ to be a point lying over $(\Theta, \Lambda)$. Since $(\Theta, \Lambda)\not\in \pi_2(\Phi_{\rm b})$, this implies that $(x,y)\in \Gamma_0$ (since $\Gamma_0$ and $\Gamma_{\rm b}$ must be disjoint and $\Gamma=\Gamma_{\rm b}\cup \Gamma_0$). Hence, $y=pr_X(x)$ which implies that $x\in \widetilde{B}$. 
\end{proof}
\begin{proposition}\label{Prop:DescriminateVar}
Using the notation above, $g_i(X,\CC^n)=g_i(X,\CC^n)(\Theta, \Lambda)$ for~$\mbox{$(\Theta, \Lambda)\not \in \Delta_{pr_X}$}$. 
\end{proposition}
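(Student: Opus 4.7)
The plan is to identify the fiber of the projection $\pi_2|_{\Phi_\Gamma}$ over a given $(\Theta,\Lambda)$ with the affine scheme $\widetilde{B}$ computing $g_i(X,\CC^n)(\Theta,\Lambda)$ via Lemma \ref{lemma:fiber}, and then use the Jacobian condition encoded in $\mathfrak{D}$ to ensure that outside $\Delta_{pr_X}$ the fiber is reduced with the generic cardinality. The non-surjective case is immediate: $\Delta_{pr_X}=\pi_2(\Phi_\Gamma)$ is a proper Zariski closed subset, so any $(\Theta,\Lambda)\not\in\Delta_{pr_X}$ has empty fiber, forcing $\widetilde{B}=\emptyset$ via Lemma \ref{lemma:fiber} and hence $g_i(X,\CC^n)(\Theta,\Lambda)=0$; since a general $(\Theta,\Lambda)$ also lies outside $\pi_2(\Phi_\Gamma)$, Theorem \ref{thm:Computeg_i} gives $g_i(X,\CC^n)=0$, and the two values agree.

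In the surjective case, $\pi_2|_{\Phi_\Gamma}$ is dominant. Since $\Gamma$ is (the closure of) the graph of $f$, the variety $\Phi_\Gamma$ is cut out set-theoretically by the $n+r$ equations $y-f(x)$, $\Theta\cdot x-\mathbf{1}$, $\Lambda\cdot y$, and $\mathfrak{D}$ is precisely the Jacobian determinant of these equations with respect to the $n+r$ variables $(x,y)$. Dimension counting on $\Phi_\Gamma$ together with the generic statement of Theorem \ref{thm:Computeg_i} shows that $\pi_2|_{\Phi_\Gamma}$ is generically finite of degree $g_i(X,\CC^n)$. For $(\Theta,\Lambda)\not\in\Delta_{pr_X}$, the condition $(\Theta,\Lambda)\not\in\pi_2(\Phi_{\rm b})$ places the entire fiber inside $\Gamma_0$, so Lemma \ref{lemma:fiber} provides a scheme-theoretic bijection with $\widetilde{B}$ via the polynomial map $x\mapsto(x,f(x))$; simultaneously, the condition $(\Theta,\Lambda)\not\in\pi_2(\Var(\mathfrak{D})\cap\Phi_\Gamma)$ ensures $\mathfrak{D}\neq 0$ at every fiber point, so $\pi_2|_{\Phi_\Gamma}$ is \'etale at each such point and the fiber is a reduced $0$-dimensional scheme.

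To conclude $g_i(X,\CC^n)(\Theta,\Lambda)=g_i(X,\CC^n)$, one invokes the principle that over the Zariski-open complement of $\Delta_{pr_X}$ the \'etale finite fibers of $\pi_2|_{\Phi_\Gamma}$ all carry the generic cardinality. The main obstacle is justifying this constancy in the affine setting: $\pi_2|_{\Phi_\Gamma}$ is not a priori proper, so a solution in the fiber could escape to infinity as $(\Theta,\Lambda)$ specializes, causing the cardinality to drop below $g_i(X,\CC^n)$ at non-discriminant points, leaving only upper semicontinuity available. The cleanest remedy, following the template of \cite[\S3.3,~A.2]{H17}, is to work with a suitable projective (or multiprojective) compactification of $\Gamma$, verify that all additional intersection contributions arising in the compactified fibers are absorbed either by $\pi_2(\Phi_{\rm b})$ or by the Jacobian locus $\pi_2(\Var(\mathfrak{D})\cap\Phi_\Gamma)$, and then invoke conservation of number to transfer the generic count $g_i(X,\CC^n)$ to every $(\Theta,\Lambda)\not\in\Delta_{pr_X}$.
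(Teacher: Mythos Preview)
Your proposal follows essentially the same strategy as the paper's proof: split into the surjective and non-surjective cases for $\pi_2|_{\Phi_\Gamma}$, use Lemma~\ref{lemma:fiber} to identify the fiber over $(\Theta,\Lambda)$ with $\widetilde{B}$, and interpret the condition $\mathfrak{D}\neq 0$ on the fiber as saying $(\Theta,\Lambda)$ is a regular value of $\pi_2|_{\Phi_\Gamma}$.

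The one substantive difference is in how the constancy of the fiber cardinality over regular values is justified in the surjective case. You correctly flag that $\pi_2|_{\Phi_\Gamma}$ is not \emph{a priori} proper and propose to repair this via a compactification-and-conservation-of-number argument in the spirit of \cite[\S3.3,~A.2]{H17}; the paper instead dispatches this step by a direct citation to \cite[Prop.~12.19, Cor.~12.20, (12.6.2)]{AGI} for the statement that regular fibers of a dominant morphism all have the same cardinality. Your route is more explicit about the underlying subtlety but leaves the compactification step unexecuted; the paper's route is terser and defers the work to the cited reference. Either way the argument has the same shape, and your sketch would be completed by carrying out the compactification you describe or simply by invoking the same reference the paper uses.
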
\begin{proof}
The proof is split into two cases: either the restriction of $\pi_2$ to $\Phi_\Gamma$ is surjective or not. 

Suppose that $\pi_2$ restricted to $\Phi_\Gamma$ is not surjective. 
Note that since the map $\pi_2$ is closed, then we must have that $\Delta_{pr_X}$ has codimension at least one.  If $(\Theta, \Lambda)\not\in\Delta_{pr_X}$, then we have that there is no $(x,y)$ such that $(x,y,\Theta, \Lambda)\in \Phi_\Gamma$. 
Note that $\pi_2(\Phi_{\rm b})\subset \pi_2(\Phi_\Gamma)$ by definition.
By Lemma~\ref{lemma:fiber}, it follows that $\Var(\Theta \cdot x -{\bf 1},\Lambda \cdot f)\backslash X\subset \CC_x^n$ is empty. 
Hence, $g_i(X,\CC^n)=g_i(X,\CC^n)(\Theta, \Lambda)=0$.

Suppose that $\pi_2$ restricted to $\Phi_\Gamma$ is surjective. Let $(\Theta, \Lambda)\in (\CC_\Lambda^{(n-i)\times r} \times \CC_\Theta^{i\times n})\backslash \pi_2(\Phi_{\rm b})$. 
By \cite[Lemma A.11]{H17}, we have that $(\Theta, \Lambda)$ is a regular value of the restriction of $\pi_2$ to $\Gamma$ if and only if for any $(x,y,\Theta, \Lambda)$ in the fiber $\pi_2^{-1}(x,y,\Theta, \Lambda)\cap \Phi_\Gamma$, 
we have that $\mathfrak{D}(x,y,\Theta, \Lambda)\neq 0$. Hence, for $(\Theta, \Lambda)\in \left(\CC_\Lambda^{(n-i)\times r} \times \CC_\Theta^{i\times n} \right)\backslash \Delta_{pr_X}$ we have that $(\Theta, \Lambda)$ is a regular value of the restriction of $\pi_2$ to $\Gamma$. Then, since $\pi_2$ restricted to $\Phi_\Gamma$ is surjective, it follows that $\pi_2|_{\Phi_\Gamma}$ is a dominant map, and hence the fibers of its regular values have the same cardinality (e.g., see \cite[Prop. 12.19, Cor. 12.20, (12.6.2)]{AGI}). The conclusion follows by Lemma~\ref{lemma:fiber}. 
\end{proof}

The following provides a degree bound.

\begin{proposition}
Let $V=\Var(y_1-f_1(x),\dots,y_r-f_r(x))\subset \CC_x^n\times \CC_y^r$,
$D_{\min}=\min_{j} \deg(f_j)$, and $D_{\max}=\max_j \deg(f_j)$. 
Then, there exists a polynomial $Q\in \CC[\Theta, \Lambda]$ of degree at most 
$$2^n\cdot\left(D_{\min} + (r+n)\cdot D_{\max}\right)\cdot \deg(V)$$ 
such that if $Q(\Theta, \Lambda)\neq 0$, then $g_i(X,\CC^n)=g_i(X,\CC^n)(\Theta, \Lambda)$. \label{Prop:AffineDiscriminantBound}
\end{proposition}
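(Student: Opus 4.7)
The plan is to invoke Proposition \ref{Prop:DescriminateVar} and reduce the problem to constructing a polynomial $Q\in\CC[\Theta,\Lambda]$ vanishing on the discriminant $\Delta_{pr_X}$ of \eqref{eq:Discriminant}; any such $Q$ automatically gives $g_i(X,\CC^n)=g_i(X,\CC^n)(\Theta,\Lambda)$ off its zero locus. In the surjective case, $\Delta_{pr_X}$ has two pieces, $\pi_2(\Phi_{\rm b})$ and $\pi_2(\Var(\mathfrak{D})\cap \Phi_\Gamma)$, and I take $Q=Q_1 Q_2$ with $Q_j$ vanishing on the $j$th piece. Since $\deg(Q_1 Q_2)=\deg Q_1+\deg Q_2$ and the target bound $2^n(D_{\min}+(r+n)D_{\max})\deg(V)$ splits additively into $2^n D_{\min}\deg(V)+2^n(r+n)D_{\max}\deg(V)$, it is enough to produce $Q_1$ and $Q_2$ realizing these two bounds separately. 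The non-surjective case is subsumed, since there $\Delta_{pr_X}=\pi_2(\Phi_\Gamma)$ is cut out by a smaller polynomial handled by the same $Q_1$-style construction with the hypersurface factor dropped.

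For $Q_1$, observe first that the role of $f_1$ in the definition of $\Gamma_0$ is arbitrary: the graph $V=\Var(y-f(x))$ is irreducible (being isomorphic to $\CC^n$), so any nonzero $f_j$ yields the same closure $\Gamma=V$, and I may relabel so that $\deg f_1=D_{\min}$. Then $\Phi_{\rm b}$ sits inside the four-factor product $\CC_x^n\times\CC_y^r\times\CC_\Theta^{i\times n}\times\CC_\Lambda^{(n-i)\times r}$ as the intersection of the pullback of $V$ (variety degree $\deg(V)$), the hypersurface $\{f_1=0\}$ (degree $D_{\min}$), the $i$ bilinear equations $\Theta x=\mathbf{1}$ in the pair $(x,\Theta)$, and the $n-i$ bilinear equations $\Lambda y=0$ in the pair $(y,\Lambda)$. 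A multihomogeneous B\'ezout bound on this four-factor product, applied to the $n$ bilinear constraints, followed by elimination of $(x,y)$ along the projection $\pi_2$, produces a polynomial $Q_1\in\CC[\Theta,\Lambda]$ of degree at most $2^n\cdot D_{\min}\cdot\deg(V)$, with the factor $2^n$ tracking the contribution of the $n$ bilinear constraints to the class of the image under $\pi_2$ in the multigraded Chow ring.

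For $Q_2$, the essential quantitative input is a degree bound on $\mathfrak{D}$. The $(r+n)\times(r+n)$ Jacobian matrix has $r$ rows coming from ${\rm Jac}_{x,y}(y-f(x))$ with entries of total degree at most $D_{\max}-1$, and $n$ rows linear in $\Theta$ or $\Lambda$, so expanding the determinant gives $\deg(\mathfrak{D})\le r(D_{\max}-1)+n\le (r+n)D_{\max}$. The same multihomogeneous B\'ezout bookkeeping as for $Q_1$, but with the hypersurface $\{\mathfrak{D}=0\}$ in place of $\{f_1=0\}$, yields $Q_2\in\CC[\Theta,\Lambda]$ vanishing on $\pi_2(\Var(\mathfrak{D})\cap\Phi_\Gamma)$ with $\deg Q_2\le 2^n\cdot (r+n)D_{\max}\cdot\deg(V)$. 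Multiplying gives $Q=Q_1 Q_2$ of the asserted total degree.

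The main technical obstacle is the multihomogeneous B\'ezout bookkeeping on the four-factor product: the $n$ bilinear constraints live in disjoint pairs of factors, and one must carefully track the class of the projection $\pi_2$ of a codimension-one subvariety cut out by these constraints together with one extra hypersurface and the pullback of $V$. This is the affine analog of the multiprojective calculation carried out in \cite[\S3.3, A.2]{H17}; the novelty here is that the parameter spaces $\CC^{i\times n}$ and $\CC^{(n-i)\times r}$ are not compactified, so one must verify that no extra degree contribution arises at infinity when bounding the eliminant.
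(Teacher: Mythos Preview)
Your proposal is correct and follows essentially the same route as the paper: invoke Proposition~\ref{Prop:DescriminateVar}, split $\Delta_{pr_X}$ into its two pieces in the surjective case, bound each piece via a B\'ezout-type inequality, and take $Q=Q_1Q_2$. The main difference is that the paper's argument is more elementary at the B\'ezout step. Rather than invoking multihomogeneous B\'ezout on a four-factor product followed by elimination theory, the paper simply treats each of the $n$ bilinear constraints $\Theta x=\mathbf{1}$ and $\Lambda y=0$ as a hypersurface of \emph{total} degree~$2$, applies the ordinary B\'ezout inequality to obtain $\deg(\Phi_{\rm b})\le 2^n\cdot D_{\min}\cdot\deg(V)$ and $\deg(\Var(\mathfrak{D})\cap\Phi_\Gamma)\le 2^n\cdot(r+n)D_{\max}\cdot\deg(V)$, and then uses only the fact that degree does not increase under projection to bound the degree of each piece of $\Delta_{pr_X}$. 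This sidesteps exactly the technical obstacle you flag at the end: no multigraded Chow-ring bookkeeping or compactification analysis is needed, and your worry about extra degree contributions at infinity does not arise. Your sharper intermediate bound $\deg(\mathfrak{D})\le r(D_{\max}-1)+n$ is correct but not exploited in the final statement.
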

\begin{proof}
We first consider the case where $\pi_2$ restricted to $\Phi_\Gamma$ is surjective. 
Note that we may choose the polynomial of minimal degree to be $f_1$ 
when defining $\Gamma_0$, i.e., assume \mbox{$D_{\min} = \deg(f_1)$}. 
We must bound the degree of the discriminant $\Delta_{pr_X}=\pi_2(\Phi_{\rm b})$. 
By B\'ezout's Theorem, 
$$
\deg(\Phi_b)\leq \deg(\Gamma_{\rm b})\cdot \deg(\Theta \cdot x -{\bf 1})\cdot \deg(\Lambda \cdot y)=2^n\cdot \deg(\Gamma_{\rm b}). 
$$ 
We next bound $\deg(\Gamma_{\rm b})$.
Let $\Omega = \left(\CC^n\setminus\Var(f_i)\right)\times \CC^r$.
For $G(x,y)=y-f(x)$,
we have $\Omega\cap \Gamma =\Omega\cap \Var(G)$. 
It follows that $\Gamma$ is one of the irreducible components 
of $V=\Var(G)$.  Hence, $\deg(\Gamma)\leq \deg(V)$.
Moreover, 
$$\Gamma_{\rm b}=\Gamma \cap (\Var(f_1) \times \CC^r),$$ 
showing that $\deg(\Gamma_{\rm b})\leq D_{\min} \cdot \deg(V)$. 
Therefore, $\deg(\Phi_{\rm b})\leq  2^n\cdot D_{\min} \cdot \deg(V)$.
Since degrees cannot increase under projection, it follows that 
$\deg(\Delta_{pr_X})\leq \deg(\pi_2(\Phi))\leq 2^n\cdot D_{\min} \cdot \deg(V)$. 

Next, we need to bound $\deg(\pi_2(\Var(\mathfrak{D})\cap \Phi_\Gamma)) \leq \deg(\Var(\mathfrak{D})\cap\Phi_\Gamma)$.
The polynomial $\mathfrak{D}$ has degree at most 
$(r+n)\cdot D_{\max}$ and $\deg(\Phi_\Gamma) \leq 2^n\cdot \deg(V)$.
The result now follows from \eqref{eq:Discriminant}.

The nonsurjective case follows similarly 
with $\deg(\Delta_{pr_X})\leq 2^n \cdot \deg(V)$.
\end{proof}

With this upper bound on degree, we can derive probabilistic bounds.

\begin{proposition}
With the setup from Proposition~\ref{Prop:AffineDiscriminantBound},
if the values for $\Lambda$, $\Theta$, and $\mu$ are chosen 
randomly from a finite set of $S\subset \QQ$, 
then $$
P(g_i(X,\CC^n)=\dim_\CC(R/I_i(\Theta, \Lambda,\mu))\geq 1-\frac{
g_i(X,\CC^n) + 2^n \cdot \left(D_{\min}+(r+n)\cdot  D_{\max}\right)\cdot \deg(V) }{|S|}.
$$\label{prop:AffineProbRandIsGen}
\end{proposition}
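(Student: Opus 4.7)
The plan is to combine Proposition~\ref{Prop:AffineDiscriminantBound} with a Schwartz-Zippel style estimate, treating the parameters $(\Theta,\Lambda)$ (which govern whether the generic intersection count is attained) and $\mu$ (which governs correctness of the Rabinowitz trick) as independent bad events and applying a union bound at the end. Let $D_1:=2^n\cdot(D_{\min}+(r+n)D_{\max})\cdot\deg(V)$.

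First, I would invoke Proposition~\ref{Prop:AffineDiscriminantBound} to obtain a nonzero polynomial $Q_1\in\CC[\Theta,\Lambda]$ of degree at most $D_1$ whose non-vanishing forces $g_i(X,\CC^n)=g_i(X,\CC^n)(\Theta,\Lambda)$, i.e., forces $\Var(\Theta\cdot x-\mathbf{1},\Lambda\cdot f)\setminus X$ to consist of $g_i(X,\CC^n)$ points counted with multiplicity. By the Schwartz-Zippel lemma, drawing each entry of $(\Theta,\Lambda)$ independently and uniformly from $S$ makes $Q_1$ vanish with probability at most $D_1/|S|$.

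Second, conditioning on $Q_1(\Theta,\Lambda)\neq 0$, I would handle $\mu$. For the Rabinowitz generator $1-(\mu\cdot f)T$ to saturate away exactly $X$ while preserving every other solution with its correct multiplicity, one needs $\mu\cdot f(q)\neq 0$ at each point $q$ of $\Var(\Theta\cdot x-\mathbf{1},\Lambda\cdot f)\setminus X$. Enumerating these as $q_1,\dots,q_{g_i(X,\CC^n)}$ and noting that $q_j\notin X=\Var(f)$ so $f(q_j)\neq 0$, each functional $\mu\mapsto\mu\cdot f(q_j)$ is a nonzero linear form on $\CC^r$. Schwartz-Zippel applied to each linear form, followed by a union bound across the $g_i(X,\CC^n)$ points, yields a failure probability of at most $g_i(X,\CC^n)/|S|$. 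A final union bound with the $(\Theta,\Lambda)$ event and an appeal to Theorem~\ref{thm:Computeg_i} then identifies $\dim_\CC(R/I_i(\Theta,\Lambda,\mu))$ with $g_i(X,\CC^n)$ on the good set, producing the claimed bound $1-(g_i(X,\CC^n)+D_1)/|S|$.

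The main subtlety is multiplicity bookkeeping in the Rabinowitz step: I need $\{\mu\cdot f=0\}$ to avoid the scheme-theoretic support of $\Var(\Theta\cdot x-\mathbf{1},\Lambda\cdot f)\setminus X$, not merely its set of closed points. This is handled by the standard observation that on the open locus $\{\mu\cdot f\neq 0\}$ the projection $(x,T)\mapsto x$ is an isomorphism, so the local length of $R[T]/I_i(\Theta,\Lambda,\mu)$ at each $q_j$ matches the local length of the original zero-dimensional scheme at $q_j$. With this in hand, the Zariski-open non-vanishing condition on $\mu$ is exactly the set-theoretic condition $\mu\cdot f(q_j)\neq 0$ for $j=1,\dots,g_i(X,\CC^n)$, which is what the Schwartz-Zippel argument bounds.
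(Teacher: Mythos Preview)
Your proposal is correct and follows essentially the same route as the paper: invoke the polynomial $Q$ from Proposition~\ref{Prop:AffineDiscriminantBound} for the $(\Theta,\Lambda)$ part, then for $\mu$ use the nonzero linear forms $\mu\mapsto\mu\cdot f(q_j)$ at the $g_i(X,\CC^n)$ points outside $X$, and finish with Schwartz--Zippel. The only cosmetic difference is that the paper packages the linear forms into a single product polynomial $U(\mu)=\prod_j(\mu\cdot f(q_j))$ of degree $g_i(X,\CC^n)$ and applies Schwartz--Zippel once to $U\cdot Q$, whereas you union-bound over the individual factors; your added remark on multiplicity preservation under the Rabinowitz projection is a welcome clarification that the paper leaves implicit.
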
\begin{proof}
Consider the ideal
\begin{equation}
J_i(\Theta, \Lambda)=\langle\Theta \cdot x -{\bf 1}, \Lambda \cdot f\rangle:\langle f \rangle^\infty.
\end{equation} 
Let $Q\in \CC[\Theta, \Lambda]$ be as in Proposition~\ref{Prop:AffineDiscriminantBound}
and suppose $\Theta$ and $\Lambda$ 
are selected such that $Q(\Theta, \Lambda)\neq 0$. 
Then, we know $\dim(J_i(\Theta, \Lambda))=0$ and $s:=g_i(X,\CC^n)=\deg(J_i(\Theta, \Lambda))$. 
Suppose that $\Var(J_i(\Theta, \Lambda))=\{q_1,\dots, q_s\}$ and 
consider the following polynomial of degree $s = g_i(X,\CC^n)$:
$$
U(\mu)=(\mu_1f_1(q_1)+\cdots+\mu_rf_r(q_1))\cdots (\mu_1f_1(q_s)+\cdots+\mu_rf_r(q_s))\in \CC[\mu_1,\dots, \mu_r].
$$
Note that if $U(\mu)\neq 0$, then $\deg(J_i(\Theta, \Lambda))=\deg(I_i(\Theta, \Lambda,\mu))$. 
The probability bound follows from applying the Schwartz-Zippel Lemma \cite{Schwartz,Zippel} using the polynomial $U\cdot Q \in \CC[\Theta, \Lambda,\mu]$. 
\end{proof}

The following considers the computation over finite fields.
 
\begin{corollary}
Let $f=[f_1,\dots, f_r]^T\subset \ZZ[x_1,\dots,x_n]$, 
$i\in \{0,\dots,n-1\}$,
and $p$ be a prime that is large relative to the coefficients of $f$.  Set $D_{\max} = \max_j \deg(f_j)$, 
$D_{\min} = \min_j \deg(f_j)$,~and
$$V=\Var(y_1-f_1(x),\dots,y_r-f_r(x))\subset \CC_x^n\times \CC_y^r.$$ 
Suppose that $$g_i(X,\CC^n)_{\ZZ_p}^{\rm rand}=\dim_{\ZZ_p}(R/I_i(\Theta, \Lambda,\mu))$$ is computed over $\ZZ_p$ 
where each entry of $\Theta$, $\Lambda$, and $\mu$ are selected
randomly from $\ZZ_p$.  Then,
$$
\hbox{\small $P(g_i(X,\CC^n)_{\ZZ_p}^{\rm rand}=g_i(X,\CC^n))\geq \left(\dfrac{p-1}{p} \right)^\nu \left(1-\dfrac{g_i(X,\CC^n) + 2^n \cdot \left(D_{\min}+(r+n)\cdot D_{\max}\right) \cdot \deg(V)}{p} \right)$}
$$where $\nu$ is as in Theorem~\ref{thm:ProbPauerLucky}
with the upper bound $$
 \nu \leq {\deg(I_i(\Theta, \Lambda, \mu)) +(n+1) \choose (n+1)}.
$$
\label{Corr:ProbBound_Affine_OverFiniteFeild}
 \end{corollary}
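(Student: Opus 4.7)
The plan is to combine Theorem~\ref{thm:ProbZZpVsQQ} with Proposition~\ref{prop:AffineProbRandIsGen} in a straightforward manner, treating the two possible failure modes, Pauer unluckiness of $p$ and nongenericity of the random choice, as separately controlled events. The hypothesis that $p$ is large relative to the coefficients of $f$ ensures that the generators of $I_i(\Theta,\Lambda,\mu)$ reduce sensibly modulo $p$, which is exactly the setup under which both of the preceding probability estimates were established.

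First, I would identify the finite set $\mathcal{S}=\{0,1,\dots,p-1\}\subset\QQ$ with the canonical residue representatives of $\ZZ_p$, so that a uniformly random choice of entries of $(\Theta,\Lambda,\mu)$ from $\ZZ_p$ corresponds bijectively to a uniformly random choice from $\mathcal{S}\subset\QQ$. Applying Proposition~\ref{prop:AffineProbRandIsGen} with $|\mathcal{S}|=p$ then gives
$$P\bigl(g_i(X,\CC^n)=g_i(X,\CC^n)_{\mathcal{S}\subset\QQ}^{\rm rand}\bigr)\geq 1-\frac{g_i(X,\CC^n)+2^n\cdot\left(D_{\min}+(r+n)\cdot D_{\max}\right)\cdot\deg(V)}{p}.$$

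Next, I would invoke Theorem~\ref{thm:ProbZZpVsQQ}, which bounds the probability that the finite-field computation returns the correct value in terms of the probability that the rational computation returns the correct value, multiplied by the factor $\bigl((p-1)/p\bigr)^\nu$ that accounts for Pauer luckiness of $p$. Substituting the preceding lower bound into the right-hand side of Theorem~\ref{thm:ProbZZpVsQQ} yields the claimed inequality, and the upper bound on $\nu$ carries over verbatim from Theorem~\ref{thm:ProbPauerLucky} since it was established there for precisely this ideal. I do not expect any genuine obstacle: the corollary is essentially a transparent concatenation of the two preceding probability estimates, with all of the discriminant bookkeeping already handled in Proposition~\ref{Prop:AffineDiscriminantBound} and all of the Gr\"obner-theoretic bookkeeping already handled in Theorems~\ref{Theorem:GeneralCOefficentsAllPrimesLucky} and~\ref{thm:ProbPauerLucky}.
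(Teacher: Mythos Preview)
Your proposal is correct and follows exactly the paper's approach: the paper's proof is the single sentence ``This follows immediately from Theorem~\ref{thm:ProbZZpVsQQ} and Proposition~\ref{prop:AffineProbRandIsGen},'' and you have simply spelled out the substitution in detail.
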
\begin{proof}
 This follows immediately from Theorem \ref{thm:ProbZZpVsQQ} and Proposition \ref{prop:AffineProbRandIsGen}.
 \end{proof}

\section{Computations}\label{sec:Computations}

The following considers problems associated with Alt's problem
introduced in Section~\ref{subSection:AltsIntro}.
We let $f = [f_1,\dots,f_{15}]^T$ as in Section~\ref{subSection:AltsIntro}
which depend upon variables 
$\mathbf{x}=[a,\bar{a},b,\bar{b},x,\bar{x},y,\bar{y}]^T$
with $X=\Var(f)\subset \CC^8$ as in \eqref{eq:AltBaseLocus}. Throughout this section, 
we will suppose that all primes~$p$ being considered are such that $p\geq 5$
so that $p$ is large relative to the coefficients of $f$ 
as in Definition~\ref{def:RealitivelyLarge}. 
The following finds primes that
are large enough to provide a high probability 
that $g_i(X,\CC^8)_{\ZZ_p}^{\rm rand} = g_i(X,\CC^8)$.
The modular computations 
described below utilized {\tt Magma}~\cite{Magma}.

\subsection{Probabilistic considerations} \label{subSec:Prob}

The following specialize results from Section~\ref{sec:ProbRandomIsGeneral}
to the setup related to Alt's problem for computing
the numbers $g_i(X,\CC^8)$.  In particular, we provide theoretical bounds
on the size of the prime $p$ required to obtain a desired level of certainty
in computing $g_i(X,\CC^8)$ using random choices with computations
over $\ZZ_p$.  

\begin{corollary}
With the setup described above, 
$$
P(g_i(X,\CC^8)_{\ZZ_p}^{\rm rand}=g_i(X,\CC^8))\geq \left(\frac{p-1}{p} \right)^\nu \left(1-\frac{
g_i(X,\CC^8) + \hbox{\rm 
317,987,389,440,000} }{p} \right)
$$where $\nu$ as in Theorem~\ref{thm:ProbPauerLucky} 
which is bounded above by ${ g_i(X,\CC^8)+9 \choose 9}$.
\label{Cor:Prob_Alts}
\end{corollary}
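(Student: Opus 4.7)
The plan is to derive this corollary as a direct specialization of Corollary \ref{Corr:ProbBound_Affine_OverFiniteFeild} to the Alt's problem data; one just instantiates $n=8$ and $r=15$, extracts $D_{\min}$ and $D_{\max}$ from the explicit $f_j$, and bounds $\deg(V)$. Reading these off the polynomials displayed in Section \ref{subSection:AltsIntro}, inspection shows $\deg f_1 = 2$ is minimal while $f_{14}$ and $f_{15}$ attain the maximum degree $7$ (the first factor of $f_{14}$ has degree $4$ and the second degree $3$), so $D_{\min}=2$ and $D_{\max}=7$. Tallying all $15$ degrees gives one $2$, two $3$'s, three $4$'s (namely $f_4,f_5,f_8$), four $5$'s, three $6$'s, and two $7$'s.

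Next I would bound $\deg(V)$ for $V = \Var(y_1 - f_1(\mathbf{x}), \ldots, y_{15} - f_{15}(\mathbf{x})) \subset \CC_{\mathbf{x}}^8 \times \CC_y^{15}$. Because $V$ is the graph of a polynomial map $\CC^8 \to \CC^{15}$, it is irreducible of dimension $8$ in $\CC^{23}$, and being cut by exactly $15 = 23 - 8$ hypersurfaces it is a complete intersection. Each defining polynomial $y_j - f_j(\mathbf{x})$ has degree $\deg f_j \geq 2$, so by B\'ezout
\[
\deg(V) \leq \prod_{j=1}^{15} \deg f_j = 2 \cdot 3^2 \cdot 4^3 \cdot 5^4 \cdot 6^3 \cdot 7^2 = 7{,}620{,}480{,}000.
\]

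Substituting into Corollary \ref{Corr:ProbBound_Affine_OverFiniteFeild} then gives
\[
2^n\bigl(D_{\min}+(r+n)D_{\max}\bigr)\cdot\deg(V) \leq 256 \cdot 163 \cdot 7{,}620{,}480{,}000 = 317{,}987{,}389{,}440{,}000,
\]
which is the constant in the statement. The bound on $\nu$ follows by substituting $n=8$ into Theorem \ref{thm:ProbPauerLucky} and using $\deg I_i(\Theta,\Lambda,\mu) = g_i(X,\CC^8)$ for general parameters (Theorem \ref{thm:Computeg_i}). No step is really an obstacle; the only point that requires a moment's thought is verifying that $V$ is a complete intersection, which is immediate since the graph of a polynomial map has the expected dimension in the product space.
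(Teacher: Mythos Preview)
Your proposal is correct and follows exactly the same approach as the paper's proof, which simply records $n=8$, $r=15$, $D_{\min}=2$, $D_{\max}=7$, and invokes B\'ezout to get $\deg(V)\leq 7{,}620{,}480{,}000$. You supply more detail---the explicit degree tally, the arithmetic for the constant, and the observation that $V$ is a complete intersection---but the underlying argument is identical.
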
\begin{proof}
Using the notation of Corollary \ref{Corr:ProbBound_Affine_OverFiniteFeild},
we have $n = 8$, $r = 15$, $D_{\min}=2$, and $D_{\max}=7$. 
Using Bez\'out's theorem, we obtain the bound $\deg(V)\leq \hbox{7,620,480,000}$. 
\end{proof}

For example, when $i = 0$, $I_0(\Theta,\Lambda,\mu)$ is generated 
by eight general linear combinations of $f$
along with the polynomial $1-(\mu \cdot f)\cdot T$.
The results of \cite{ProductDecomp} can be extended to this situation
to yield $g_0(X,\CC^8) \leq \hbox{18,700}$. 
For $\omega={18709 \choose 9}$, this gives $$
P(g_i(X,\CC^8)_{\ZZ_p}^{\rm rand}=g_i(X,\CC^8))\geq \left(\frac{p-1}{p} \right)^{\omega} \left(1-\frac{\hbox{317,987,389,458,700}}{p} \right).
$$
For example, if we take $p>2^{116}$, we have $P(g_i(X,\CC^8)_{\ZZ_p}^{\rm rand}=g_i(X,\CC^8))\geq 0.99$
Although this bound is pessimistic, it provides
an {\em a priori} lower bound on the probability.
The following compares the probabilistic 
bound for $g_6(X,\CC^8)$
with what is actually achieved in experiments.

\subsection{Practical success rate for $g_{6}(X,\CC^8)$}\label{subsec:PracticalSuccess_g6}

To compare the theoretical lower bounds 
with the practical success rate, we consider 
the simplest nontrivial case, namely computing $g_6(X,\CC^8)$.  
B\'ezout's Theorem together with 
$X$ described in \eqref{eq:AltBaseLocus}
provides $g_6(X,\CC^8)\leq 49-2 = 47$
while the actual value of $43$
can be verified using methods described in Section~\ref{sec:HilbertFunctions}.
Using $g_{6}(X,\CC^8)\leq 47$ with Corollary \ref{Cor:Prob_Alts}, we have
$$
P(g_i(X,\CC^8)_{\ZZ_p}^{\rm rand}=g_i(X,\CC^8))\geq \left(\frac{p-1}{p} \right)^{\hbox{7,575,968,400}} \left(1-\frac{\hbox{317,987,389,440,047}}{p} \right).
$$ 
Therefore, the theoretical results show that $p>2^{55}$ 
will yield a probability of success over~$0.99$.
For this problem, note that the size of $p$ is dominated by
the second term of the product since
$$\log_2\left(100\cdot\hbox{317,987,389,440,047}\right)\geq 54.8
\hbox{\,\,\,\,\,while\,\,\,\,\,}
\left(\frac{2^{55}-1}{2^{55}}\right)^{\hbox{7,575,968,400}}
\geq 
0.9999997897.$$

To check the practical success rate for various smaller primes $p$, 
we performed 10,000 random trials for selected primes $p$.
Table~\ref{table:g6AltsSuccesRate} lists the number of successes,
i.e., when $g_6(X,\CC^8)^{\rm rand}_{\ZZ_p} = 43$.
In particular, this experiment, as expected, shows that
the {\em a priori} theoretical bounds are quite pessimistic.
For example, for all the selected primes $p\geq 2^{11}-9=2039$, 
we already obtained a success rate over $99\%$ in 10,000 random trials.

\begin{table}[h!]
\centering
\resizebox{0.4\linewidth}{!}{
\begin{tabular}{@{} *2c @{}}
\toprule 
 \multicolumn{1}{c}{\color{Ftitle}Prime $p$} &   {\color{Ftitle}Successes in 10,000 Trials}\\ 
 \midrule 
   \multicolumn{1}{l}{$2^6-5$} & ~8,202  \\
  \multicolumn{1}{l}{$2^7-15$} & ~9,057  \\
 \multicolumn{1}{l}{$2^8-5$} &~9,592  \\
 \multicolumn{1}{l}{$2^{11}-9$} &~9,941  \\ 
 \multicolumn{1}{l}{$2^{13}-1$}& ~9,986\\
  \multicolumn{1}{l}{$2^{14}-3$}& ~9,994\\
 \multicolumn{1}{l}{$2^{15}-19$}& ~9,998\\   
 \multicolumn{1}{l}{$2^{16}-17$}& ~9,999\\
 \multicolumn{1}{l}{$2^{17}-1$}& 10,000\\
 \multicolumn{1}{l}{$2^{18}-5$}& 10,000\\
 \multicolumn{1}{l}{$2^{19}-1$}& 10,000\\
\bottomrule
 \end{tabular}}\vspace{1mm}
\caption{Number of successes, i.e.,
when $g_6(X,\CC^8)_{\ZZ_p}^{\rm rand} = 43$,
for 10,000 trials
using various primes $p$.\label{table:g6AltsSuccesRate}}
 \end{table}

\subsection{Larger computations} 

As mentioned in Section~\ref{subSection:AltsIntro},
$g_i(X,\CC^8)$ is an upper bound on the degree
of the set of four-bar linkages whose coupler curve interpolates
$9-i$ general points.  
The actual degree of these sets were first reported
in \cite[Table~1]{Alts1} which were computed using homotopy continuation.
For convenience, we lists these degrees in Table~\ref{table:Degrees}.

\begin{table}[h!]
\centering
\begin{tabular}{c|cccccccc}
{\color{Ftitle}$i$} & 7 & 6 & 5 & 4 & 3 & 2 & 1 & 0 \\
\hline
{\color{Ftitle}degree} & 7 & 43 & 234 & 1108 & 3832 & 8716 & 10858 & 8652\\   
 \end{tabular}\vspace{1mm}
\caption{Degree of the 
set of four-bar linkages whose coupler curve interpolates
$9-i$ general points for $i = 0,\dots,7$.}\label{table:Degrees}
\end{table}

Trivially, $g_7(X,\CC^8) = 7$.
For $i = 6$, the results of Section~\ref{subsec:PracticalSuccess_g6}
show that $g_6(X,\CC^8) = 43$ is also a sharp upper bound.  
Hence, we consider the sharpness for $i = 0,\dots,5$
by computing $g_i(X,\CC^8)_{\ZZ_p}^{\rm rand}$
for various primes $p$ where the parameters
are selected uniformly at random in~$\ZZ_p$.
The results of these computations,
which were performed using a 2.5 GHz Intel Xeon E5-2680~v3 processor with 256 GB of memory,
are summarized in Table~\ref{table:Modular}.
The ones marked with ``$-$'' failed to finish in 10 days,
while the ones in {\bf bold} agree with the 
degrees reported in Table~\ref{table:Degrees}.
In particular, these results
suggest that $g_i(X,\CC^8)$ is indeed a sharp
upper bound for all $i = 0,\dots,7$.
We note that Table~\ref{table:Modular} shows a 
considerable increase in the computational time needed for the two largest
primes in our experiment compared with the other selected primes.  

To provide additional validation of $g_0(X,\CC^8) = 8652$,
we performed 10 additional trails of
$g_i(X,\CC^8)_{\ZZ_p}^{\rm rand}$ for $p = 2^{23}+9$
as well as using randomly selected floating-point parameters 
via homotopy continuation in {\tt Bertini}.
All of these experiments also yielded $8652$.

Table~\ref{table:HF} lists the purported 
Hilbert functions in $\CC[\mathbf{x},T]$.
These are in agreement with numerical computations
based on using solutions computed by
homotopy continuation in {\tt Bertini}.

\begin{table}[ht!]
\centering
\begin{tabular}{c|cccccc}
{\color{Ftitle}$p\setminus i$} & 5 & 4 & 3 & 2 & 1 & 0 \\
\hline
\multirow{2}{*}{$2^{1}+1$} & 176 & 849 & 3352 & 6192 & 8756 & 8492 \\ 
& 0.16s & 4.66s & 2.50m & 0.52h & 4.69h & 0.84d\\
\hline
\multirow{2}{*}{$2^{2}+1$} & 189 & 975 & 3594 & 6803 & 10693 & 8586 \\
& 0.23s & 6.99s & 3.36m & 0.79h & 7.01h & 1.20d\\
\hline
\multirow{2}{*}{$2^{3}+3$} & 233 & 1003 & 3566 & 7673 & 10651 & 8644 \\
& 0.27s & 6.26s & 3.94m & 1.24h & 7.44h & 3.45d\\
\hline
\multirow{2}{*}{$2^{4}+1$} & {\bf 234} & 1059 & 3766 & 8635 & 10739 & 8646 \\
& 0.23s & 7.94s & 4.46m & 1.20h & 7.99h & 1.82d\\
\hline
\multirow{2}{*}{$2^{7}+3$} & {\bf 234} & 1100 & 3812 & {\bf 8716} & {\bf 10858} & {\bf 8652} \\
& 0.29s & 8.49s & 4.38m & 1.42h & 8.55h & 2.69d\\
\hline
\multirow{2}{*}{$2^{11}+5$} & {\bf 234} & 1107 & {\bf 3832} & {\bf 8716} & {\bf 10858} & {\bf 8652} \\
& 0.34s & 7.53s & 4.64m & 1.41h & 9.68h & 3.11d\\
\hline
\multirow{2}{*}{$2^{15}+3$} & {\bf 234} & {\bf 1108} & {\bf 3832} & {\bf 8716} & {\bf 10858} & {\bf 8652} \\
& 0.23s & 8.45s & 4.67m & 1.41h & 8.52h & 2.54d\\
\hline
\multirow{2}{*}{$2^{19}+21$} & {\bf 234} & {\bf 1108} & {\bf 3832} & {\bf 8716} & {\bf 10858} & {\bf 8652} \\
& 0.24s & 7.85s & 4.83m & 1.47h & 9.36h & 3.03d\\
\hline
\multirow{2}{*}{$2^{23}+9$} & {\bf 234} & {\bf 1108} & {\bf 3832} & {\bf 8716} & {\bf 10858} & {\bf 8652} \\
& 0.24s & 7.96s & 4.83m & 1.56h & 9.11h & 2.91d\\
\hline
\multirow{2}{*}{$2^{27}+29$} & {\bf 234} & {\bf 1108} & {\bf 3832} & {\bf 8716} & {\bf 10858} & \multirow{2}{*}{$-$} \\
& 0.81s & 87.63s & 95.23m & 36.76h & 250.31h& \\
\hline
\multirow{2}{*}{$2^{31}+11$} & {\bf 234} & {\bf 1108} & {\bf 3832} & \multirow{2}{*}{$-$} & \multirow{2}{*}{$-$} & \multirow{2}{*}{$-$} \\
& 3.08s & 395.85s & 1385.45m & & & \\
 \end{tabular}\vspace{1mm}
\caption{Values of $g_i(X,\CC^8)_{\ZZ_p}^{\rm rand}$ 
for $i = 0,\dots,5$ and various primes $p$
for randomly selected values of the parameters
along with timings listed in either seconds~(s), minutes (m), hours (h), or days (d).
}\label{table:Modular}
\end{table}

\begin{table}[h!]
\centering
\begin{tabular}{c|ccccccccccc}
{\color{Ftitle}$i\setminus d$} & 0 & 1 & 2 & 3 & 4 & 5 & 6 & 7 & 8 & 9 & 10 \\
\hline
7 & 1 & 3 & 6 & 7 & 7 & 7 & 7 & 7 & 7 & 7 & 7\\
\hline
6 & 1 & 4 & 10 & 20 & 35 & 43 & 43 & 43 & 43 & 43 & 43\\
\hline
5 & 1 & 5 & 15 & 35 & 70 & 126 & 209 & 234 & 234 & 234 & 234\\
\hline
4 & 1 & 6 & 21 & 56 & 126 & 252 & 460 & 777 & 1108 & 1108 & 1108 \\
\hline
3 & 1 & 7 & 28 & 84 & 210 & 462 & 920 & 1686 & 2876 & 3832 & 3832 \\
\hline
2 & 1 & 8 & 36 & 120 & 330 & 791 & 1704 & 3362 & 6154 & 8716 & 8716 \\
\hline
1 & 1 & 9 & 45 & 165 & 495 & 1285 & 2981 & 6307 & 10858 & 10858 & 10858\\
\hline
0 & 1 & 10 & 55 & 220 & 715 & 1999 & 4971 & 8652 & 8652 & 8652 & 8652\\

 \end{tabular}\vspace{1mm}
\caption{Comparison of Hilbert function values for $i = 0,\dots,7$}\label{table:HF}
\end{table}

\section{Conclusion}\label{Section:Conclusion}
For a polynomial system $f$ with $X = \Var(f)\subset \CC^n$ and a variety $Y \subset \CC^n$ containing $X$, 
we considered the computational problem of computing $g_i(X,Y)$ 
for $i=0,\dots, \dim(Y)-1$ as defined in \eqref{eq:giXY}. 
This question was primarily motivated by its application to providing a sharp
upper bound to Alt's problem, namely
$g_0(X,\CC^8)$ where $X = \Var(f_1,\dots,f_{15})$ as formulated
in Section~\ref{subSection:AltsIntro}.
More generally, the values $g_i(X,Y)$ also arise in the computation 
of the volumes of Newton-Okounkov bodies and for the computation of 
characteristic classes such as Chern and Segre classes
as described in Section~\ref{subsec:ProjDegCharClass}.
In particular, Section~\ref{sec:ProbRandomIsGeneral}
provides an analysis of a probabilistic saturation technique 
for computing $g_i(X,Y)$ using Gr\"obner basis
computations over finite fields.

In the context of Alt's problem, the numbers $g_i(X,\CC^8)$ for $i = 0,\dots,7$ 
provide an upper bound on the degree of the set of four-bar linkages
whose coupler curve interpolates $9-i$ general points, which is
a variety of dimension~$i$.  In fact, the solution to Alt's original problem 
from 1923 on the number of distinct four-bar linkages that interpolate $9$ general
points is bounded above by $g_0(X,\CC^8)/2$ and the number of distinct
coupler curves that interpolate~$9$ general points 
is bounded above by $g_0(X,\CC^8)/6$.
In 1992, homotopy continuation was used in \cite{WMS} 
to show the actual numbers were 4326 and 1442, respectively.
However, due to the nature of the computation, 
they could not be sure that no additional solutions could exist. 
In fact, it is reported in \cite{WMS} that 
one of the solutions was missed by their homotopy continuation solver, 
but was reconstructed using Roberts'
cognate formula.  Therefore,
we posit that verifying $g_0(X,\CC^8)=8652$ 
theoretically completes Alt's problem
by showing that there could be no additional solutions.
To that end, we analyzed a probabilistic saturation technique 
(see Theorem \ref{thm:Computeg_i} and Corollary \ref{Corr:ProbBound_Affine_OverFiniteFeild}) 
using Gr\"obner basis computations over various 
finite fields to 
show $g_0(X,\CC^8)^{\rm rand}_{\ZZ_p}=8652$ for
various primes $p$,
a result that is also confirmed numerically using~homotopy~continuation.

As shown in Section~\ref{subSec:Prob}, 
we were able to derive {\em a priori} theoretical bounds 
from Corollary~\ref{Corr:ProbBound_Affine_OverFiniteFeild} 
for the probabilistic computation of 
$g_0(X,\CC^8)=8652$.  This bound requires
the prime characteristic of our finite field to be $p>2^{116}$ 
in order to guarantee a probability of success greater than $0.99$. 
Unfortunately, our computational resources
together with {\tt Magma} do not currently permit
computing $g_0(X,\CC^8)_{\ZZ_p}^{\rm rand}$
for $p> 2^{116}$.
In fact, Table~\ref{table:Modular} shows a drastic increase
in computational time between $p \leq 2^{23}+9$ and $p\geq 2^{27}+29$.

On the other hand, Section~\ref{subsec:PracticalSuccess_g6}
shows the pessimism of the {\em a priori} 
theoretical bounds in that much higher
success rates can be obtained in practice when computing over finite fields with much smaller characteristic.  In particular, 
for the probabilistic computation of $g_{6}(X,\CC^8)=43$, 
the bound of Corollary~\ref{Corr:ProbBound_Affine_OverFiniteFeild} suggests 
one would need to take a finite field with prime characteristic larger than $2^{55}$ to obtain a success rate greater than $99\%$.
Experimentally, 
we found that we achieved this success rate on 10,000 random trials
with all selected primes $p\geq 2^{11}-9=2039$. 
Further, even with the prime $p=2^7-15=113$,
we achieved a success rate of over $90\%$ in our experiment.
Naturally, this leads us to conclude that the bound of 
Corollary~\ref{Corr:ProbBound_Affine_OverFiniteFeild} 
of $2^{116}$ for computing~$g_0(X,\CC^8)$ 
with a success rate greater than $99\%$
is also overly pessimistic.
Coupled with numerical homotopy continuation computations,
we have confidence that the probabilistic computational 
results indeed correctly yield \mbox{$g_0(X,\CC^8)=8652$}.

Finally, we remark that the probabilistic saturation technique has 
permitted computations related to Alt's problem 
to terminate using symbolic methods.
For example, our probabilistic saturation computations 
related to $i = 6$
completed on average in under $0.1$ seconds for our experiments
using both {\tt Magma} and {\tt Macaulay2} \cite{M2} for the primes listed in Table~\ref{table:g6AltsSuccesRate}.
A similar problem was attempted
in \cite[\S 5.4]{Comparison} using primary decomposition over finite fields 
via {\tt Singular}~\cite{DGPS}, a computation which failed to terminate in 24 hours.  In fact, to generate the data for 
Table \ref{table:g6AltsSuccesRate},
the 10,000 random trials for each prime were completed 
in under 10 minutes.  This drastic increase in speed 
allowed us to successful 
complete random trials for computing $g_i(X,\CC^8)$ 
associated with Alt's problem for $i = 0,\dots,6$
for various primes as reported in Section~\ref{sec:Computations}.
In particular, since the values
of $g_i(X,\CC^8)$ match the results
in Table~\ref{table:Degrees}, we have utilized
symbolic methods to confirm 
numerical homotopy continuation computations in \cite{Alts1,WMS}.

\section*{Acknowledgments}

The authors would like to thank the 
Fall 2018 semester program on {\em Nonlinear Algebra}
at the Institute for Computational and Experimental Research in Mathematics (ICERM) where this project was started.
JDH was supported in part by NSF CCF 1812746.

\bigskip 
\scriptsize

\bigskip

\footnotesize

\noindent {\bf Jonathan D. Hauenstein}. Department of Applied and Computational Mathematics 
and Statistics, University of Notre Dame, Notre Dame, IN, USA.
(\email{hauenstein@nd.edu}, \url{www.nd.edu/~jhauenst}).

\bigskip

\noindent {\bf Martin Helmer}.  Mathematical Sciences Institute, The Australian National University, Canberra, ACT, Australia. (\email{martin.helmer@anu.edu.au}, \url{www.maths.anu.edu.au/people/academics/martin-helmer}).

\end{document}